\documentclass[12pt, reqno]{amsart}
\usepackage{amssymb}
\usepackage{amsmath,amssymb,color}
\usepackage{hyperref}
\allowdisplaybreaks

\newtheorem{thm}{Theorem}[section]
\newtheorem{lem}[thm]{Lemma}
\newtheorem{cor}[thm]{Corollary}

\newtheorem{defn}[thm]{Definition}
\newtheorem{rmk}{Remark}

\numberwithin{equation}{section}
\newcommand{\DDD}{\mathcal{D}}

\newcommand{\ep}{\varepsilon}
\newcommand{\la}{\lambda}
\newcommand{\va}{\varphi}
\newcommand{\ppp}{\partial}

\newcommand{\whwh}{\widehat}
\newcommand{\uull}{u_{\lambda}}

\newcommand{\ddda}{d_t^{\alpha}}

\newcommand{\MAAA}{\mathcal{A}}

\newcommand{\pppa}{\partial_t^{\alpha}}

\newcommand{\R}{\mathbb{R}}
\newcommand{\C}{\mathbb{C}}
\newcommand{\N}{\mathbb{N}}

\newcommand{\www}{\widetilde}

\newcommand{\ooo}{\overline}
\newcommand{\OOO}{\Omega}

\newcommand{\LTLT}{L^2(0,T;X)}
\newcommand{\LTLTX}{L^2(0,T;X)}

\newcommand{\JJJJ}{\mathcal{J}_{\lambda}}
\newcommand{\AAAA}{A_{\lambda}}

\allowdisplaybreaks

\title
[Well-posedness for time-fractional evolution equations]
{
Well-posedness for time-fractional evolution equations by the 
Yosida approximation
\vspace{1cm}
}

\author{
$^1$ Salah-Eddine Chorfi, $^2$ Fikret G\"olgeleyen, $^{3,2}$ Masahiro Yamamoto
\vspace{1cm}
}
\thanks{\hspace{-0.4cm}$^1$ Cadi Ayyad University, UCA, Faculty of Sciences Semlalia, Laboratory of Mathematics, Modeling and Automatic Systems, B.P. 2390, Marrakesh, Morocco, e-mail: {\tt s.chorfi@uca.ac.ma}\\
$^2$ Department of Mathematics, Faculty of Science, 
Zonguldak B\"ulent Ecevit University, Zonguldak 67100 T\"urkiye,
e-mail: {\tt f.golgeleyen@beun.edu.tr}
\\
$^3$ Graduate School of Mathematical Sciences, The University
of Tokyo, Komaba, Meguro, Tokyo 153-8914, Japan,
e-mail: {\tt myama@ms.u-tokyo.ac.jp}}

\date{}
\begin{document}

\begin{abstract}
We consider an initial value problem for a time-fractional evolution equation
in a Hilbert space $X$:
$$
\partial_t^{\alpha} (u(t)-a) = Au(t) \qquad \mbox{for $0<t<T$},
$$
where $\partial_t^{\alpha}$ denotes a fractional differential operator of Caputo type with order $0<\alpha<1,$ $u\colon (0,T) \to X,$ $a$ describes an initial value, and $A$, with domain $\mathcal{D}(A),$ is the generator of a contraction C$_0$ semigroup in $X$.
We prove a fractional Hille-Yosida theorem characterizing the unique existence of a weak solution for every $a\in X$. We also discuss the unique existence of a strong solution for $a \in \mathcal{D}(A),$ as well as the continuity of weak solutions. Our results are applicable, for example, to time-fractional transport equations and Boltzmann equations. The proofs are mainly based on constructing an approximating sequence of solutions using the Yosida approximation.
\end{abstract} 

\keywords{Time-fractional equation, Yosida approximation, weak solution, strong solution, Hille-Yosida theorem}

\subjclass[2020]{Primary 35R11; Secondary 35D30, 47D06}

\maketitle

\baselineskip 18pt

\section{Introduction}
Fractional evolution equations have attracted considerable attention in recent years because they can describe dynamical processes with memory and hereditary effects. In contrast to classical evolution equations, where the instantaneous rate of change of the state depends only on its current configuration, time-fractional models account for the influence of the past through a nonlocal operator in time. Such equations arise naturally in anomalous diffusion, viscoelasticity, transport in heterogeneous media, relaxation phenomena, and many other applications; see, e.g., \cite{CM71, MG00, MK00, SB03}.
\smallskip

In this paper, we are concerned with the well-posedness of abstract 
time-fractional evolution equations. More precisely, let $(X,(\cdot,\cdot)_X)$ 
be a real Hilbert space and $0 < \alpha < 1$. We can also consider a complex 
Hilbert space with simple adaptations.
We consider an evolution equation in $X$:
\begin{equation}\label{(1.1)}
\pppa (u(t)-a) = Au(t)\quad \mbox{in $X$ for $0<t<T$},
\end{equation}
where $\pppa$ is a fractional derivative of order $\alpha$ that will be 
defined later.

From a mathematical perspective, the presence of the time-fractional derivative
in \eqref{(1.1)} leads to substantial differences from the classical theory of 
evolution equations ($\alpha=1$) and requires analytical tools capable of 
handling its nonlocal-in-time structure. One of the main challenges is therefore to determine to what extent the powerful methods developed for classical evolution equations can be adapted to the fractional setting. We refer, for instance, to the monographs \cite{J, KRY, Po} for systematic studies of fractional differential equations and some related well-posedness results. We also emphasize that the reference list for time-fractional evolution equations is not exhaustive, and we mention only a few.

Our purpose is to develop a direct well-posedness approach based on the Yosida approximation of the operator $A$. Yosida approximations have previously been employed in the fractional setting; for instance, Kamenskii et al.~\cite{Ka17} used them to justify approximation procedures for semilinear fractional equations. In contrast, our approach uses the Yosida approximation as a direct tool for constructing both weak and strong solutions. We mention the work by Bazhlekova 
\cite{Baz98}, where solvability criteria for fractional evolution equations in Banach spaces are established using subordination and Laplace-transform techniques. We also refer to the classical book by Pr\"uss \cite{Pru93} for a broad formulation by integral equations. In comparison, our formulation is based on a direct realization of the fractional differential operator, which is particularly well suited to the $L^2$-Hilbert space framework considered here. This allows us to obtain well-posedness by direct energy estimates and weak compactness arguments under relaxed regularity assumptions. Furthermore, the integral equation formulation may conflate the initial datum and the source term, a distinction that is essential in many applications, particularly in inverse problems. It may also lead to difficulties when treating certain ranges of the order $\alpha.$

Throughout this article, unless otherwise stated, we assume:
\begin{equation}\label{(1.2)}
\begin{cases}
& \mbox{$A$ is a closed densely defined operator in $X$, and}\\
& {\bf Condition\, (\mathcal{M}):}\\
&\{ \la \in \R; \, \la> 0\} \subset \rho(A) \, 
\mbox{: the resolvent set of $A$}, \\
& \la\Vert (\la-A)^{-1}\Vert_{X\to X} \le 1 \quad \mbox{for all $\la > 0$}.
\end{cases}
\end{equation}

The work \cite{Baz98} shows that for more general $A$, some estimates of all the orders of derivatives of the resolvent imply the well-posedness of the initial value problem, and technically such estimates correspond to the convergence of a related Taylor series, but the verification of the estimates for all the orders of derivatives is not simple. Condition $(\mathcal{M})$ restricts such estimates of derivatives of all orders to only the first-order derivative within a smaller class of $A$. However, our class of $A$ covers
transport equations, as Examples 1 and 2 below show. 
 
Assumption \eqref{(1.2)} is a necessary and sufficient condition that $A$ generates a contraction C$_0$ semigroup 
$e^{tA}$ for $t\ge 0$ (e.g., Pazy \cite{Pa}, Tanabe \cite{Ta}).

The results obtained in this paper are formulated at an abstract level and therefore apply to a broad class of partial differential equations whose spatial part is realized as the generator of a C$_0$ semigroup. Relevant examples include time-fractional transport equations, for which relatively few well-posedness results are available, partly because the standard method of characteristics does not extend directly to the fractional setting; see, for instance, \cite{ER18} and \cite{GN17} for some fractional transport equations. Our approach also applies to time-fractional Schrödinger equations, which have recently attracted increasing attention; we refer to \cite{CEMY26} for related results.

Henceforth, for an operator $K$ and Banach spaces $Y, Z$, we mean by $K: Y \rightarrow Z$ that the domain $\DDD(K)$ is 
included in $Y$ with values in $Z$, but is not necessarily defined over the whole space $Y.$

The paper is organized as follows. In Section \ref{sec2}, we present the main well-posedness results. The main results are proved in Sections \ref{sec3} - \ref{sec7}. Section \ref{sec8} is devoted to concluding remarks, and Section \ref{sec9} provides the proofs of necessary lemmata used throughout the paper.

\section{Main results}\label{sec2}

We introduce function spaces for the Hilbert space $X$ with 
the norm $\Vert \cdot\Vert_X$ and the scalar product 
$(\cdot,\cdot)_X$.
First we set
\begin{equation}\label{(2.1)}
L^2(0,T;X):= \left\{ v:(0,T) \longrightarrow X \text{ strongly measurable};\,
\int^T_0 \Vert v(t)\Vert^2_X dt < \infty\right\}
\end{equation}
with $(v,w)_{L^2(0,T;X)}:= \int^T_0 (v(t),w(t))_X dt$
and $\Vert v\Vert_{L^2(0,T;X)} := ((v,v)_{L^2(0,T;X)})^{\frac{1}{2}}$.

We define the graph norms $\Vert \cdot\Vert_{\DDD(A)}$ and 
$\Vert \cdot\Vert_{\DDD(A^*)}$ by 
$$
\Vert u\Vert_{\DDD(A)}: = \Vert u\Vert_X + \Vert Au\Vert_X
\quad \mbox{for $u\in \DDD(A)$},
$$
and
$$
\Vert u\Vert_{\DDD(A^*)}: = \Vert u\Vert_X + \Vert A^*u\Vert_X
\quad \mbox{for $u\in \DDD(A^*)$}.
$$

Then, we define an operator $\MAAA: L^2(0,T;\DDD(A)) \rightarrow \LTLTX$ by 
$$
(\MAAA u)(t) = Au(t) \quad \mbox{for $u\in \DDD(\MAAA)
:= L^2(0,T;\DDD(A))$}.
$$
For simplicity, sometimes we write $Au$ to mean $\MAAA u$, etc. We can easily prove that $\ooo{\DDD(\MAAA)^{\LTLTX}} = \LTLTX$.
Hence, we can define the adjoint operator $\MAAA^*$ to 
$\MAAA : \LTLTX \rightarrow \LTLTX$.  More precisely, we can 
specify $\MAAA^*$ as the maximal operator among operators $\mathcal{B}: \LTLTX
\rightarrow \LTLTX$ satisfying
$(\MAAA u, \va)_{\LTLTX} = (u, \mathcal{B}\va)_{\LTLTX}$ for all $u \in \DDD(\MAAA)$.
In other words,
\begin{align*}
& \DDD(\MAAA^*) = \{ \va\in \LTLTX;\, \mbox{we can find $w\in 
\LTLTX$ satisfying}\\
& (\MAAA u,\va)_{\LTLTX} = (u,w)_{\LTLTX} \,\, \mbox{for all 
$u\in \DDD(\MAAA)$} \},
\end{align*}
and we define $\MAAA^*\va := w$.

\begin{rmk}
Here and henceforth, we define the adjoint operators with respect to the scalar product in $\LTLTX$.  We remark that we do not consider 
adjoint operators with duality product $_{Y^*}\langle\cdot, \, \cdot\rangle_{Y}$ where $X$ is a pivot space and $Y \hookrightarrow X \hookrightarrow Y^*$ is a prescribed Gel'fand triple (see e.g., Brezis \cite{Bre}, Section 6 of Chapter 1 in Yagi \cite{Ya}). Through such a duality product, in Yamamoto \cite{Y22}, we can 
define $(\pppa)^*$ in Sobolev spaces of negative orders, but in this article we do not consider adjoint operators in this way.
\end{rmk}
 
We set 
$$
J^{\alpha}v(t) := \frac{1}{\Gamma(\alpha)}
\int^t_0 (t-s)^{\alpha-1} v(s) ds \quad \mbox{for $v\in L^2(0,T;X)$}.
$$
The operator $J^\alpha$ is bounded and injective on $L^2(0,T;X)$. Note that we can also define $J^\alpha v$ for $v\in L^1(0,T;X)$. Then, we define $\pppa = (J^{\alpha})^{-1}$ with 
$\DDD(\pppa) = J^{\alpha}\LTLTX=:H_{\alpha}(0,T;X)$ and the norm 
$\|v\|_{H_{\alpha}(0,T;X)}:=\|\pppa v\|_{\LTLTX}$.
We can prove that $\ooo{\DDD(\pppa)^{\LTLTX}} = \LTLT$, and so
we can define the adjoint operator $(\pppa)^*$ to $\pppa$.

Now we consider
\begin{equation}\label{(2.2)}
\pppa (u(t)-a) = Au(t) \quad \mbox{in $X$ for $0<t<T$},
\end{equation}
and introduce
\begin{defn}[weak solution]\label{defws}
We call $u=u(t)$ a weak solution to \eqref{(2.2)} if $u\in L^2(0,T;X)$ 
satisfies
\begin{equation}\label{(2.3)}
(u-a,\, (\pppa)^*\va)_{L^2(0,T;X)}
= (u,\, \MAAA^*\va)_{\LTLT}
\end{equation}
for all $\va \in \DDD((\pppa)^*) \cap \DDD(\MAAA^*)$.
\end{defn}

\begin{defn}[strong solution]\label{defss}
We call $u=u(t)$ a strong solution to \eqref{(2.2)} if
\begin{equation*}
u\in L^2(0,T;\DDD(A)), \qquad u-a\in H_\alpha(0,T;X),
\end{equation*}
and \eqref{(2.2)} is satisfied in $L^2(0,T;X)$.
\end{defn}



We will prove the following main results.
\begin{thm}[time-fractional Hille-Yosida theorem]\label{thm1}
We assume that $A$ is a densely defined closed operator. In order that for each $a \in X$, there exists a unique weak solution to \eqref{(2.2)} satisfying
$u \in C([0,T];X)$ and  
\begin{equation}\label{(2.4)}
\Vert u(t)\Vert_X \le \Vert a\Vert_X \quad \text{ for all } t \ge 0,
\end{equation}
Condition $(\mathcal{M})$ is necessary and sufficient.
\end{thm}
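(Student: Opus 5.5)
Sufficiency is proved by approximating $A$ with its Yosida approximation and passing to the limit; necessity is proved by a Laplace-transform argument that reads off Condition $(\mathcal{M})$.

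\textbf{Sufficiency: the approximating problems.} Assume Condition $(\mathcal{M})$. For $\la>0$ let $\AAAA := \la A(\la-A)^{-1} = \la^2(\la-A)^{-1}-\la$. This is a bounded operator with $\|e^{t\AAAA}\|\le 1$, hence $(\AAAA v,v)_X\le 0$ for all $v\in X$. With bounded $\AAAA$, the equation $\pppa(\uull-a)=\AAAA\uull$ is equivalent to the Volterra equation $\uull = a + J^\alpha \AAAA \uull$. This equation has a unique solution $\uull\in C([0,T];X)$ by a Neumann series: the $n$-th iterate is bounded by $\|\AAAA\|^n t^{n\alpha}/\Gamma(n\alpha+1)$. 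Equivalently, $\uull(t)=E_\alpha(t^\alpha \AAAA)a$ with the Mittag-Leffler function, so $\uull-a\in H_\alpha(0,T;X)$.

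\textbf{Sufficiency: uniform bounds.} To get the contraction estimate uniformly in $\la$, I would represent $E_\alpha(t^\alpha\AAAA)$ by subordination, $E_\alpha(t^\alpha\AAAA)=\int_0^\infty \Phi_\alpha(s)\,e^{st^\alpha\AAAA}\,ds$, where $\Phi_\alpha\ge0$ is the Wright function and $\int_0^\infty\Phi_\alpha\,ds=1$. This gives $\|\uull(t)\|_X\le\|a\|_X$ directly. The same formula applied to $A$ defines the candidate limit $u(t):=\int_0^\infty\Phi_\alpha(s)e^{st^\alpha A}a\,ds$. Since $e^{\tau\AAAA}a\to e^{\tau A}a$ uniformly on compact $\tau$-sets, dominated convergence gives $\uull\to u$ in $C([0,T];X)$. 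It follows that $u\in C([0,T];X)$ and $\|u(t)\|_X\le\|a\|_X$.

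\textbf{Sufficiency: weak formulation and uniqueness.} Take $\va\in\DDD((\pppa)^*)\cap\DDD(\MAAA^*)$. The strong equation for $\uull$ gives $(\uull-a,(\pppa)^*\va)=(\AAAA\uull,\va)=(\uull,\AAAA^*\va)$. Using $\AAAA^*=\la A^*(\la-A^*)^{-1}$ and that $\va(t)\in\DDD(A^*)$ for a.e.\ $t$, we get $\AAAA^*\va=\la(\la-A^*)^{-1}A^*\va\to A^*\va$ in $L^2(0,T;X)$ by the standard Yosida convergence for the contraction semigroup $e^{tA^*}$ together with dominated convergence. Letting $\la\to\infty$ yields \eqref{(2.3)}.

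Uniqueness is the main obstacle. For a weak solution $w$ with $a=0$, I would test against $\va=J_{T-}^\alpha$-type functions built from $(\la-A^*)^{-1}$. Concretely, I would show $(\la-A)^{-1}w$ is a strong solution with zero data. Then energy uses the coercivity inequality $(\pppa v,v)_{L^2(0,t;X)}\ge0$ for $v\in H_\alpha(0,T;X)$ together with $(Av,v)\le0$, giving $v=0$. The density of $\{\va\in\DDD((\pppa)^*)\cap\DDD(\MAAA^*)\}$, for example functions $\psi(t)\eta$ with $\eta\in\DDD(A^*)$ and $\psi$ smooth vanishing near $T$, is what makes this work.

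\textbf{Necessity.} Suppose that for each $a$ there is a unique weak solution $u(t)=S(t)a$ with $\|S(t)\|\le1$. Uniqueness gives linearity, and closedness gives the Volterra relation $u-a=J^\alpha A u$ in an integrated sense. Indeed, taking $\va=\psi\eta$ shows $J^\alpha u(t)\in\DDD(A)$ and $u(t)-a=AJ^\alpha u(t)$. Extending to all $T$ by uniqueness, take the Laplace transform $\hat u(z)$ for $z>0$. We obtain $z^{\alpha}\hat u-z^{\alpha-1}a=A\hat u$, so $z^{\alpha-1}a=(z^\alpha-A)\hat u$ and $\la-A$ is surjective for $\la=z^\alpha>0$. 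Injectivity of $\la-A$ follows from uniqueness: if $A\eta=\la\eta$, then $E_\alpha(\la t^\alpha)\eta$ would be an unbounded solution, contradicting \eqref{(2.4)}. Finally, $\|\hat u(z)\|\le\|a\|/z$ gives $\la\|(\la-A)^{-1}a\|=z\|\hat u\|\le\|a\|$, which is Condition $(\mathcal{M})$.
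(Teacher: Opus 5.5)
Your proposal is correct. Your uniqueness argument is the paper's Lemma \ref{llem3.1}: show that $(\la-A)^{-1}w$ is a strong solution with zero data, then apply an energy inequality. Existence and necessity, however, go by different routes.

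For existence, the paper never uses $e^{tA}$. It bounds $\uull$ uniformly via Lemma \ref{lem5.2} and the dissipativity of $\AAAA$, and extracts a weakly convergent subsequence in $\LTLTX$. It gets the contraction bound from weak closedness of a convex set. It proves continuity separately (Theorem \ref{thmcont}) by a Cauchy argument for data in $\DDD(A^2)$ plus density.

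Your subordination formula $E_\alpha(t^\alpha\AAAA)=\int_0^\infty\Phi_\alpha(s)e^{st^\alpha\AAAA}\,ds$ gives, all at once, the contraction bound, uniform convergence in $C([0,T];X)$, continuity, and an explicit representation of $u$. The price is that it presupposes the classical Hille--Yosida theorem, namely the existence of $e^{tA}$ and the locally uniform convergence $e^{\tau\AAAA}a\to e^{\tau A}a$. It also needs the positivity, normalization and moment identities of the Wright function. In effect you deduce the fractional theorem from the classical one. That is Bazhlekova's subordination approach, which the paper sets out to replace by a self-contained energy and compactness method using only dissipativity. In your argument the Yosida approximation is needed only to transfer the weak formulation.

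In the necessity part, you test with $\va=(J_\alpha\chi)\eta$ to obtain $J^\alpha u(t)\in\DDD(A)$ and $AJ^\alpha u(t)=u(t)-a$ for every $a\in X$. Laplace-transforming this yields $\mathcal{R}(\la-A)=X$ and the resolvent bound directly. The paper instead restricts to $a\in\DDD(A)$, upgrades to a strong solution through Theorem \ref{cor1.4}(i), and then needs a closedness and density step for surjectivity. Your route is shorter, provided you handle the $\eta$-dependent null sets. Either use the continuity of $u$ and of $J^\alpha u$, or test with $L^2(0,T;\DDD(A^*))$-valued functions as in Section \ref{sec6}.

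Two small repairs are needed in your uniqueness sketch.

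First, nonnegativity $(\pppa v,v)\ge0$ combined with $(Av,v)_X\le0$ only yields $(\pppa v,v)=0$. To conclude $v=0$ you need a strict bound such as $(\pppa v,v)_{\LTLTX}\ge\frac{T^{-\alpha}}{2\Gamma(1-\alpha)}\|v\|^2_{\LTLTX}$, which is what the paper uses.

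Second, $L^2$-density of the functions $\psi(t)\eta$ is not what closes the argument. What you need is that for \emph{every} $\psi\in\DDD((\pppa)^*)$, the function $(\la-A^*)^{-1}\psi$ is an admissible test function with $(\pppa)^*(\la-A^*)^{-1}\psi=(\la-A^*)^{-1}(\pppa)^*\psi$. This holds because the resolvent commutes with $J_\alpha$. Then $(z,(\pppa)^*\psi)_{\LTLTX}=(Az,\psi)_{\LTLTX}$ holds on all of $\DDD((\pppa)^*)$, and $((\pppa)^*)^*=\pppa$ gives $z\in H_\alpha(0,T;X)$ with $\pppa z=Az$.
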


\begin{rmk}
This theorem is a generalization to $0<\alpha<1$ of the classical Hille-Yosida theorem for $\alpha=1,$ which asserts the equivalence of Condition $(\mathcal{M})$ and the well-posedness of $u'(t) = Au$ with $u(0) = a\in X,$ where $A$ generates a contraction C$_0$ semigroup (e.g., Pazy \cite{Pa} and Tanabe \cite{Ta}). We refer to Da Prato and Iannelli \cite{Da80} and Sforza \cite{Sf86} 
for some related results for integro-differential equations.
\end{rmk}

Next, we prove that any weak solution automatically belongs to 
$C([0,T];X)$.
\begin{thm}\label{thmcont}
We assume \eqref{(1.2)}.
If $u\in L^2(0,T;X)$ is a weak solution of \eqref{(2.2)}
for $a \in X$, then
$$
u\in C([0,T];X), \qquad u(0)=a.
$$
\end{thm}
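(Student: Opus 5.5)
Theorem \ref{thmcont} will follow from a uniqueness argument for weak solutions combined with the existence part of Theorem \ref{thm1}. The plan has four steps.

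\textbf{Step 1: an approximating solution.} Under \eqref{(1.2)}, we construct a weak solution $\www u$ to \eqref{(2.2)} for the given $a\in X$ with $\www u\in C([0,T];X)$ and $\www u(0)=a$. We do this via the Yosida approximation $\AAAA = \la A(\la-A)^{-1}$, which is bounded with $\Vert e^{t\AAAA}\Vert\le 1$. For bounded $\AAAA$ the problem $\pppa(\uull-a)=\AAAA\uull$ has an explicit solution $\uull(t)=E_{\alpha,1}(t^\alpha \AAAA)a$. The Mittag-Leffler operator is given by subordination: it is $\int_0^\infty \Phi_\alpha(s)e^{st^\alpha\AAAA}\,ds$ with a probability density $\Phi_\alpha$. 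This representation gives $\Vert\uull(t)\Vert\le\Vert a\Vert$ and continuity in $t$. Letting $\la\to\infty$, we have $e^{s\AAAA}a\to e^{sA}a$ uniformly on compacts. Hence $\uull\to \www u(t):=\int_0^\infty\Phi_\alpha(s)e^{st^\alpha A}a\,ds$ uniformly on $[0,T]$, by dominated convergence with tightness of $\Phi_\alpha$. So $\www u\in C([0,T];X)$ and $\www u(0)=a$. Passing to the limit in the weak formulation \eqref{(2.3)} is easy. Take $\va\in\DDD((\pppa)^*)\cap\DDD(\MAAA^*)$; then $\AAAA^*\va=\la(\la-A^*)^{-1}A^*\va\to A^*\va$ in $\LTLTX$. (This is essentially the existence part of Theorem \ref{thm1}, so we may simply invoke it if it is proved independently of the present theorem.)

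\textbf{Step 2: reduction to $a=0$.} Let $u$ be the given weak solution and set $w=u-\www u$. By linearity, $w\in\LTLTX$ satisfies $(w,(\pppa)^*\va)=(w,\MAAA^*\va)$ for all admissible $\va$. It suffices to show $w=0$, because then $u=\www u\in C([0,T];X)$ and $u(0)=a$.

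\textbf{Step 3: uniqueness by duality.} We need a large enough supply of test functions. For $f\in C_0^\infty((0,T);\DDD(A^*))$, or just $f\in L^2(0,T;\DDD(A^*))$, we solve the backward adjoint problem $(\pppa)^*\va-\MAAA^*\va=f$. Here $(\pppa)^*$ is the backward Riemann--Liouville-type operator $(J^\alpha)^{*-1}$, with $(J^\alpha)^*v(t)=\frac1{\Gamma(\alpha)}\int_t^T(s-t)^{\alpha-1}v(s)ds$. Since $A^*$ also satisfies Condition $(\mathcal{M})$, Step 1 applied to $A^*$ in reversed time $t\mapsto T-t$ gives the candidate. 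Concretely, $\va=(J^\alpha)^*\psi$ where $\psi$ solves $\psi=f+A^*(J^\alpha)^*\psi$. We solve this with $A^*_\la$ and pass to the limit, using the energy-type estimate $\Vert\psi_\la\Vert_{\LTLTX}\le C\Vert f\Vert$. That estimate comes from coercivity $(J^\alpha v,v)_{\LTLTX}\ge0$ together with $(A^*_\la v,v)\le0$. For $f$ smooth with values in $\DDD(A^*)$, we also bound $A^*_\la\psi_\la$, since $A^*_\la$ commutes with the equation. Closedness of $A^*$ and of $(\pppa)^*$ then gives $\va\in\DDD((\pppa)^*)\cap\DDD(\MAAA^*)$. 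Testing the equation for $w$ with this $\va$ yields $(w,f)_{\LTLTX}=0$ for a dense set of $f$, hence $w=0$.

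\textbf{Main obstacle.} The hardest part is Step 3: showing that the limit $\va$ genuinely lies in $\DDD((\pppa)^*)\cap\DDD(\MAAA^*)$, with identification of $(\pppa)^*$ as the inverse of $(J^\alpha)^*$. Here I would first verify $(\pppa)^*=((J^\alpha)^*)^{-1}$ directly from $\pppa=(J^\alpha)^{-1}$ and the boundedness and injectivity of $J^\alpha$. Then I would obtain uniform bounds in $\la$ on both $\psi_\la$ and $A^*_\la\psi_\la$ by choosing $f$ regular.
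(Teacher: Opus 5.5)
\textbf{Comparison of routes.} Your overall strategy matches the paper's: build a continuous weak solution $\www u$, then conclude $u=\www u$ by uniqueness. Both ingredients, however, are obtained differently.

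The paper gets continuity by showing that $\{u^b_\la\}$ is Cauchy in $C([0,T];X)$ for $b\in\DDD(A^2)$. This uses $\Vert A_\la c-Ac\Vert_X\le\la^{-1}\Vert A^2c\Vert_X$, Lemma \ref{lem5.2} and a generalized Gronwall inequality. It then passes to $a\in X$ by density and contraction. Uniqueness is Lemma \ref{llem3.1}, proved by showing that $(\la-A)^{-1}w$ is a strong solution of the homogeneous problem and applying coercivity.

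Your Step 1 uses subordination, $E_{\alpha,1}(t^\alpha\AAAA)=\int_0^\infty\Phi_\alpha(s)e^{st^\alpha\AAAA}\,ds$, together with $e^{s\AAAA}a\to e^{sA}a$ locally uniformly. This is correct and shorter, and it even gives the formula $\www u(t)=\int_0^\infty\Phi_\alpha(s)e^{st^\alpha A}a\,ds$. The paper's energy route avoids Wright functions and the semigroup.

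Do not ``simply invoke'' Theorem \ref{thm1} instead. In the paper, continuity there is taken from the present theorem, and the existence proof only gives $\Vert u(t)\Vert_X\le\Vert a\Vert_X$ a.e. Your own construction is needed.

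\textbf{The gap.} It lies in Step 3: the uniform bound $\Vert\psi_\la\Vert_{\LTLTX}\le C\Vert f\Vert_{\LTLTX}$ does not follow from $(J^\alpha v,v)\ge0$ and $(\AAAA^*v,v)_X\le0$.

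Pairing $\psi_\la=f+\AAAA^*J_\alpha\psi_\la$ with $\psi_\la$ leaves the cross term $(\AAAA^*J_\alpha\psi_\la,\psi_\la)_{\LTLTX}$. This term is signed for self-adjoint $A$, but not in general. Take $X=\R^2$ and $A^*y=\omega(-y_2,y_1)$. Then $(A^*J_\alpha\psi,\psi)_{\LTLTX}=\omega((J_\alpha-J^\alpha)\psi_1,\psi_2)_{L^2(0,T)}$. For $\psi_1\equiv1$ and $\psi_2=(J_\alpha-J^\alpha)1$ this equals $\omega\Vert(J_\alpha-J^\alpha)1\Vert^2>0$.

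Pairing with $\va_\la=J_\alpha\psi_\la$ instead does sign the $A$-term. But then mere nonnegativity of $(J_\alpha\psi,\psi)$ gives nothing, and what you control is $\va_\la$, not $\psi_\la=(\pppa)^*\va_\la$. Bounding $\psi_\la$ by $\Vert f\Vert$ alone is a maximal-regularity statement that needs a different argument. Since transport and Boltzmann operators are non-normal, this is a real issue.

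\textbf{Repair.} Work with $f\in L^2(0,T;\DDD(A^*))$, which you already allow, and use strict coercivity: $(J_\alpha\psi,\psi)_{\LTLTX}\ge c\Vert J_\alpha\psi\Vert^2_{\LTLTX}$ with $c=T^{-\alpha}/(2\Gamma(1-\alpha))$. This is the inequality used at the start of Section \ref{sec3}, transported by $t\mapsto T-t$. Then
\[
c\Vert\va_\la\Vert^2_{\LTLTX}\le(\psi_\la,\va_\la)_{\LTLTX}=(f,\va_\la)_{\LTLTX}+(\AAAA^*\va_\la,\va_\la)_{\LTLTX}\le\Vert f\Vert_{\LTLTX}\Vert\va_\la\Vert_{\LTLTX}.
\]
Since $\AAAA^*$ commutes with $J_\alpha$, the pair $(\AAAA^*\psi_\la,\AAAA^*\va_\la)$ solves the same system with datum $\AAAA^*f=\JJJJ^*A^*f$. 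Hence $\Vert\AAAA^*\va_\la\Vert\le c^{-1}\Vert A^*f\Vert$, and so $\Vert\psi_\la\Vert\le\Vert f\Vert+c^{-1}\Vert A^*f\Vert$.

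These bounds suffice for your limit passage. To identify $\MAAA^*\va$, write $\AAAA^*\va_\la=\MAAA^*(\JJJJ^*\va_\la)$, note $\JJJJ^*\va_\la-\va_\la=\la^{-1}\AAAA^*\va_\la\to0$, and use that the graph of $\MAAA^*$ is weakly closed. Alternatively, you can drop Step 3 entirely in favor of the much shorter resolvent argument of Lemma \ref{llem3.1}.
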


Moreover, we can prove an improved regularity result.
\begin{thm}\label{cor1.4}
We assume that there exists a unique weak solution $u$ to $\pppa (u-a) = Au$ in $X$ for each $a\in X$.
\begin{itemize}
    \item[(i)] For $a \in \DDD(A)$, the weak solution $u$ satisfies $u-a \in H_\alpha(0,T;X)$, $u\in L^2(0,T;\DDD(A))$ and
$\pppa (u(t)-a) = Au(t)$ in $X$ for almost all $t\in (0,T)$.
    \item[(ii)] Let $a \in \DDD(A^m)$ with $m\in \N$. Then the weak solution $u$ satisfies $A^mu \in L^2(0,T;X)$. 
\end{itemize}
\end{thm}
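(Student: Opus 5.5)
The plan is to reduce the regularity statement to the existence and uniqueness hypothesis, using the fact that $A$ commutes with the solution map. Let $S$ denote the map $a\mapsto u$ from $X$ to $L^2(0,T;X)$ given by the assumption.

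\textbf{Step 1: $S$ is linear and commutes with $A$.}
Linearity of $S$ follows from uniqueness, since weak solutions form a linear space by \eqref{(2.3)}. Next, take $a\in\DDD(A)$ and $u=Sa$, and consider $w:=S(Aa)$. I would show that $v(t):=a+J^{\alpha}w(t)$ is a weak solution with datum $a$. To do this, test \eqref{(2.3)} for $w$ against $\va$.

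\textbf{Step 2: formal computation behind the test.}
Formally, $\pppa(v-a)=w$. One needs $w=Av$, that is, $w=Aa+AJ^\alpha w$. This is the integrated form of the equation for $w$, namely $\pppa(w-Aa)=Aw$, because applying $J^\alpha$ gives $w-Aa=J^\alpha Aw=AJ^\alpha w$. Rigorously, the weak formulation \eqref{(2.3)} must be converted into the statement that $J^\alpha w\in\DDD(\MAAA)$ in the weak (adjoint) sense with $\MAAA J^\alpha w=w-Aa$.

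\textbf{Step 3: choice of test functions.}
I would use $\va=(J^\alpha)^*\psi$ with $\psi\in\DDD(\MAAA^*)$. For such $\va$ one has $(\pppa)^*\va=\psi$, and $\va\in\DDD(\MAAA^*)$ holds because $J^\alpha$ acts only in time and so commutes with $A^*$ pointwise. Then \eqref{(2.3)} for $w$ gives
$$(w-Aa,\psi)=(J^\alpha w,\MAAA^*\psi)$$
for all $\psi\in\DDD(\MAAA^*)$. Since $\MAAA$ is closed and densely defined, $\MAAA^{**}=\MAAA$. Hence $J^\alpha w\in L^2(0,T;\DDD(A))$ and $AJ^\alpha w=w-Aa$.

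Consequently $v-a=J^\alpha w\in H_\alpha(0,T;X)$, and $v\in L^2(0,T;\DDD(A))$ because $a\in\DDD(A)$. Moreover $\pppa(v-a)=w=Aa+AJ^\alpha w=Av$. Thus $v$ is a strong solution. A strong solution is a weak solution: pair the equation with $\va$ and move $\pppa$ and $A$ onto $\va$ through the adjoint definitions. By uniqueness, $u=v$, which proves (i). This also yields $Au=w=S(Aa)$, i.e. $AS=SA$ on $\DDD(A)$.

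\textbf{Step 4: part (ii) by induction.}
If $a\in\DDD(A^m)$, then $A^{k}a\in\DDD(A)$ for $k<m$. Applying (i) repeatedly gives $A^ku\in L^2(0,T;\DDD(A))$ and $A^{k+1}u=S(A^{k+1}a)$. Hence $A^mu=S(A^ma)\in L^2(0,T;X)$.

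\textbf{Main obstacle.}
The delicate point is the adjoint bookkeeping in Step 3. One must verify $(\pppa)^*(J^\alpha)^*=I$ on $L^2(0,T;X)$; this holds since $\pppa J^\alpha=I$ and $J^\alpha$ is bounded, so $((J^\alpha)^{-1})^*=((J^\alpha)^*)^{-1}$. One must also verify that $(J^\alpha)^*$ maps $\DDD(\MAAA^*)$ into itself with $\MAAA^*(J^\alpha)^*=(J^\alpha)^*\MAAA^*$. I would check this using $\MAAA^*\psi=A^*\psi(t)$ pointwise, first for $\psi\in L^2(0,T;\DDD(A^*))$, and then for all of $\DDD(\MAAA^*)$ by identifying $\DDD(\MAAA^*)=L^2(0,T;\DDD(A^*))$. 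That identification is provable by testing with $u=\eta(t)x$, where $\eta$ is scalar and $x\in\DDD(A)$.
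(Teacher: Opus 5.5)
Your proposal is correct and follows essentially the same argument as the paper. You take the weak solution $w$ for the datum $Aa$, set $v=a+J^\alpha w$, and test the weak formulation for $w$ with $J_\alpha\psi=(J^\alpha)^*\psi$ for $\psi\in\DDD(\MAAA^*)$; then $\MAAA^{**}=\MAAA$ gives $v\in L^2(0,T;\DDD(A))$ with $Av=w=\pppa(v-a)$, and uniqueness yields $v=u$. You also record $Au=S(Aa)$ explicitly, which is exactly what makes the induction for (ii) work; the paper only sketches that step.
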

We emphasize that this improvement is proved independently of Condition $(\mathcal{M})$. Theorem \ref{cor1.4} readily implies
\begin{cor}[unique existence of strong solution]\label{cor1}
Let $a \in \DDD(A)$. Then there exists a unique strong solution to \eqref{(2.2)} satisfying \eqref{(2.4)} and 
\begin{equation}\label{eq*}
u, \, Au \in C([0,T];X), \quad
\Vert Au(t)\Vert_X \le \Vert Aa\Vert_X \quad \mbox{for all $t \in [0,T]$.}
\end{equation}
\end{cor}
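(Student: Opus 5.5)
We derive Corollary \ref{cor1} from Theorems \ref{thm1}, \ref{thmcont} and \ref{cor1.4}, applying them to $a$ and to $Aa$.

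\emph{Existence.} Assume \eqref{(1.2)} and let $a\in\DDD(A)$. By Theorem \ref{thm1} there is a unique weak solution $u\in C([0,T];X)$ satisfying \eqref{(2.4)}. Theorem \ref{cor1.4} (i) then shows that $u$ is a strong solution: $u\in L^2(0,T;\DDD(A))$, $u-a\in H_\alpha(0,T;X)$ and $\pppa(u-a)=Au$ in $L^2(0,T;X)$.

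\emph{Uniqueness.} Any strong solution is a weak solution. Indeed, take $\va\in\DDD((\pppa)^*)\cap\DDD(\MAAA^*)$. Since $u-a\in\DDD(\pppa)$ and $u\in\DDD(\MAAA)$, the definitions of the adjoints give
$$
(u-a,(\pppa)^*\va)_{\LTLTX}=(\pppa(u-a),\va)_{\LTLTX}=(\MAAA u,\va)_{\LTLTX}=(u,\MAAA^*\va)_{\LTLTX}.
$$
Hence uniqueness of strong solutions follows from uniqueness of weak solutions in Theorem \ref{thm1}.

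\emph{Estimate \eqref{eq*}.} Let $v$ be the weak solution with initial value $Aa\in X$. By Theorem \ref{thm1}, $v\in C([0,T];X)$ and $\Vert v(t)\Vert_X\le\Vert Aa\Vert_X$. The key step is to show $Au=v$, which I would do through the resolvent. Set $R_\la:=\la(\la-A)^{-1}$ for $\la>0$, a bounded operator commuting with $A$ on $\DDD(A)$. For the strong solution $u$, the function $R_\la u$ is again a strong solution: $R_\la$ commutes with $J^\alpha$ and hence with $\pppa$, so $\pppa(R_\la u-R_\la a)=R_\la Au=AR_\la u$.

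Next consider $w_\la:=AR_\la u=\la(R_\la-I)u$. This lies in $L^2(0,T;\DDD(A))$, since $AR_\la u=R_\la Au$ and $A^2R_\la u=\la(R_\la-I)Au$. Moreover $w_\la-Aa$ equals $\la(R_\la-I)(u-a)$, which belongs to $H_\alpha(0,T;X)$, and
$$
\pppa(w_\la-Aa)=\la(R_\la-I)Au=Aw_\la .
$$
So $w_\la$ is a strong, hence weak, solution with initial value $AR_\la a$. By linearity and \eqref{(2.4)},
$$
\Vert AR_\la u(t)-v(t)\Vert_X\le\Vert AR_\la a-Aa\Vert_X=\Vert R_\la Aa-Aa\Vert_X\to0 \quad (\la\to\infty),
$$
uniformly in $t\in[0,T]$.

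On the other hand, $R_\la u(t)\to u(t)$ in $X$ as $\la\to\infty$. Since $A$ is closed, for each $t$ we get $u(t)\in\DDD(A)$ and $Au(t)=v(t)$. Therefore $Au=v\in C([0,T];X)$ and $\Vert Au(t)\Vert_X\le\Vert Aa\Vert_X$. Continuity of $u$ itself comes from Theorem \ref{thm1} (or Theorem \ref{thmcont}).

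The main obstacle is the identification $Au(t)=v(t)$ for every $t$, not merely for almost every $t$. The argument above handles this by combining uniform convergence in $t$ with the pointwise closedness of $A$, so no exceptional null set appears. The remaining technical point is that $R_\la$ commutes with $\pppa$ on $H_\alpha(0,T;X)$. This follows because $R_\la$ is bounded and commutes with the Bochner integral defining $J^\alpha$, so $R_\la H_\alpha\subset H_\alpha$ and $\pppa R_\la=R_\la\pppa$ there.
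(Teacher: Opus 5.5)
Your proof is correct, but for the key identity $Au=v$ it takes a different route from the paper. The paper reads the corollary off the \emph{proof} of Theorem \ref{cor1.4} (i). There the strong solution is built as $z=a+J^\alpha v$, with $v$ the weak solution for the datum $Aa$, and the duality argument $(\MAAA^*)^*=\MAAA$ gives $\MAAA z=v$ directly. Uniqueness then gives $z=u$, so $Au=v$, and \eqref{eq*} follows from \eqref{(2.4)} applied to $v$.

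You instead use only the \emph{statement} of Theorem \ref{cor1.4} (i) and regularize the strong solution with $R_\la=\mathcal{J}_\la$. Then $w_\la=A_\la u$ is again a strong, hence weak, solution. Linearity and uniqueness let the contraction estimate bound $w_\la-v$, and this holds for every $t$ because $w_\la=\la(R_\la-I)u$ is continuous. Closedness of $A$ finishes the argument. This is self-contained: it is essentially the resolvent-commutation trick of Section \ref{sec3}. It also actually proves the paper's remark that $Au$ is a weak solution with datum $Aa$, at the cost of some length.

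The paper's route is shorter but yields $Au=v$ only for a.e.\ $t$. Passing to every $t$ is not the obstacle you suggest, though. Since $u,v\in C([0,T];X)$ and $A$ is closed, approximate any $t$ by times $t_n$ with $Au(t_n)=v(t_n)$; this gives $u(t)\in\DDD(A)$ and $Au(t)=v(t)$.

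One slip to fix: $w_\la-Aa$ does not equal $\la(R_\la-I)(u-a)$. The correct identity is $w_\la-AR_\la a=\la(R_\la-I)(u-a)\in H_\alpha(0,T;X)$. The displayed equation should accordingly read $\pppa(w_\la-AR_\la a)=Aw_\la$. This matches your own (correct) statement in the next sentence that the initial value of $w_\la$ is $AR_\la a$.
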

We remark that the estimate \eqref{eq*} follows from \eqref{(2.4)} in Theorem \ref{thm1}, since 
$v:= Au$ is a weak solution to $\pppa (v-Aa) = Av$.

Now, we provide bounded perturbation results to broaden the applicability of our approach. Let $B$ be a bounded operator on $X$ and consider the perturbed problem
\begin{equation}\label{(pert)}
\pppa (u(t)-a) = Au(t)+Bu(t)\quad \mbox{for $0<t<T$},
\end{equation}
where the weak and strong solutions are defined by replacing $A$ by $A+B$ in Definition \ref{defws} and Definition \ref{defss}.

\begin{thm}\label{thmpert}
Let us assume \eqref{(1.2)} and let $a \in X$.  Then there exists a unique weak solution 
$u\in L^2(0,T;X)$ to \eqref{(pert)} such that
$$
\Vert u\Vert_{L^2(0,T;X)} \le C\Vert a\Vert_X \quad \mbox{for almost all 
$t \in (0,T)$},
$$
where the constant $C>0$ depends only on $\alpha, T,$ and $\|B\|_{X\to X}$.
\end{thm}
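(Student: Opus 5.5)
Throughout, we treat $B$ as a bounded perturbation of $A$, using Theorem \ref{thm1} (which holds under \eqref{(1.2)}) as the basic solution operator for the unperturbed problem. The construction is a fixed-point argument in $L^2(0,T;X)$ with a weighted norm.

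\emph{Step 1: the inhomogeneous problem.} For $f\in L^2(0,T;X)$, consider
\[
\pppa(u-a)=Au+f .
\]
Its solution is $u=S(\cdot)a+\mathcal{K}f$. Here $S(t)a$ is the weak solution given by Theorem \ref{thm1}, with $\Vert S(t)\Vert\le 1$. Via the Yosida approximation, $\mathcal{K}f$ is the limit of the solutions $w_\la$ of
\[
\pppa w_\la=\AAAA w_\la+f, \qquad \AAAA=\la A(\la-A)^{-1}.
\]
Since $\AAAA$ is bounded, $w_\la=J^\alpha(\AAAA w_\la+f)$ is a Volterra equation with a unique solution in $H_\alpha(0,T;X)$. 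The aim is the estimate
\[
\Vert w_\la(t)\Vert_X\le C\,J^\alpha\Vert f\Vert_X(t)
\]
uniformly in $\la$. I would get it from the coercivity inequality $(\pppa w,w)_X\ge \tfrac12\pppa\Vert w\Vert_X^2$ for $w\in H_\alpha$, combined with the dissipativity $(\AAAA w,w)_X\le 0$, which follows from $\Vert e^{t\AAAA}\Vert\le1$. This gives $\pppa\Vert w_\la\Vert^2\le 2\Vert f\Vert\,\Vert w_\la\Vert$. Integrating with $J^\alpha$, and using a comparison/Gronwall-type argument for Abel equations, yields
\[
\Vert w_\la\Vert_{L^2(0,t;X)}\le C\,t^\alpha\Vert f\Vert_{L^2(0,t;X)}
\]
(a Young-inequality bound for the kernel $t^{\alpha-1}$ on $(0,t)$). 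Weak compactness in $L^2$ then gives a weak limit $w$. Passing to the limit in the weak formulation with test functions $\va\in\DDD((\pppa)^*)\cap\DDD(\MAAA^*)$ requires $\AAAA^*\va\to \MAAA^*\va$. This holds because $\AAAA^*=(A^*)_\la$ and $A^*$ also satisfies (M) in a Hilbert space. Uniqueness of the weak solution (from Theorem \ref{thm1}, by linearity) identifies $\mathcal{K}f$ as a linear operator satisfying
\[
\Vert \mathcal{K}f\Vert_{L^2(0,t;X)}\le Ct^\alpha\Vert f\Vert_{L^2(0,t;X)}
\]
for every $t\le T$. This bound is causal: $\mathcal{K}f$ on $(0,t)$ depends only on $f|_{(0,t)}$.

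\emph{Step 2: the fixed point.} Define
\[
\Phi(u)=S(\cdot)a+\mathcal{K}(Bu).
\]
Then
\[
\Vert\Phi u-\Phi v\Vert_{L^2(0,t)}\le C t^\alpha\Vert B\Vert\,\Vert u-v\Vert_{L^2(0,t)} .
\]
Choose $t_0$ with $Ct_0^\alpha\Vert B\Vert\le 1/2$. This $t_0$ depends only on $\alpha$, $T$ and $\Vert B\Vert$, so $\Phi$ is a contraction on $(0,t_0)$. To extend to $(0,T)$, I would rather not restart the problem, since memory makes restarting awkward. Instead I would use the pointwise bound
\[
\Vert\mathcal{K}f(t)\Vert\le C J^\alpha\Vert f\Vert(t)
\]
(obtainable in the same way) and iterate. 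The $n$-th iterate difference is bounded by $(C\Vert B\Vert)^nJ^{n\alpha}$ applied to the initial difference. Since
\[
\Vert J^{n\alpha}\Vert\le \frac{T^{n\alpha}}{\Gamma(n\alpha+1)},
\]
the series converges and $\Phi^n$ is a contraction for large $n$. This gives existence on all of $(0,T)$, together with the bound
\[
\Vert u\Vert_{L^2}\le \sum_n \frac{(C\Vert B\Vert T^\alpha)^n}{\Gamma(n\alpha+1)}\,\sqrt T\,\Vert a\Vert = C\Vert a\Vert ,
\]
a Mittag-Leffler constant depending only on $\alpha$, $T$ and $\Vert B\Vert$.

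\emph{Step 3: checking the formulation, and uniqueness.} The fixed point $u=S a+\mathcal{K}(Bu)$ is a weak solution of \eqref{(pert)}. Indeed, $\DDD((A+B)^*)=\DDD(\MAAA^*)$, with $(\MAAA+B)^*=\MAAA^*+B^*$, so adding the weak identities for $Sa$ and for $\mathcal{K}(Bu)$ gives \eqref{(2.3)} with $A+B$. For uniqueness, suppose $u$ is any weak solution of \eqref{(pert)} with $a=0$. Then $u$ is a weak solution of $\pppa u=Au+f$ with $f=Bu\in L^2$. I expect the \textbf{main obstacle} to be showing that such a $u$ coincides with $\mathcal{K}f$, that is, uniqueness of weak solutions of the inhomogeneous unperturbed problem. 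Theorem \ref{thm1} only gives uniqueness for the homogeneous problem. The difference $u-\mathcal{K}f$, however, is a weak solution of the homogeneous problem with zero data, and that uniqueness is part of Theorem \ref{thm1}. Hence $u=\mathcal{K}(Bu)$. Applying the iterated Abel bound from Step 2 then gives $u=0$.
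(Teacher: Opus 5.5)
Your route differs from the paper's: you build a Duhamel operator $\mathcal K$ on top of Theorem \ref{thm1} and solve $u=Sa+\mathcal K(Bu)$. Most of it is sound. The causal $L^2$ bound in Step 1 follows by integrating $\Vert w_\la(t)\Vert_X^2\le 2J^\alpha(\Vert f\Vert_X\Vert w_\la\Vert_X)(t)$ over $(0,t)$. The weak-limit identification and $(\MAAA+B)^*=\MAAA^*+B^*$ are fine, and reducing uniqueness to Lemma \ref{llem3.1} via $u-\mathcal K(Bu)$ is correct.

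\textbf{The gap.} You need the pointwise bound $\Vert\mathcal Kf(t)\Vert_X\le C\,J^\alpha\Vert f\Vert_X(t)$ for the global iteration in Step 2 and for the last step of uniqueness, and you call it ``obtainable in the same way''. It is not. Write $y=\Vert w_\la\Vert_X$ and $g=\Vert f\Vert_X$. Squared-norm coercivity only gives $\pppa(y^2)\le 2gy$, i.e.\ $y^2\le 2J^\alpha(gy)$, and no comparison argument extracts $y\le CJ^\alpha g$ from this. Take $g=\varepsilon^{-1}$ on $(0,\varepsilon)$ and $g=0$ on $(\varepsilon,T)$. Set $y=c\,t^\alpha/\varepsilon$ on $(0,\varepsilon)$ with $c=2\Gamma(1+\alpha)/\Gamma(1+2\alpha)$, continued by $y=(2J^\alpha(gy))^{1/2}$. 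Then $y^2=2J^\alpha(gy)$ holds exactly. Yet for $t\ge 2\varepsilon$ you have $y(t)\ge c'(t\varepsilon)^{(\alpha-1)/2}$ while $J^\alpha g(t)\le c''t^{\alpha-1}$. Hence $y/J^\alpha g\gtrsim (t/\varepsilon)^{(1-\alpha)/2}\to\infty$ as $\varepsilon\to 0$.

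The bound itself is true, even with $C=1$. But it comes from subordination of the solution kernel to $e^{tA}$, or from a convexity inequality for $\Vert\cdot\Vert_X$ rather than $\Vert\cdot\Vert_X^2$, not from your Step 1 argument. Your causal $L^2$ bound cannot replace it either: iterated, it only gives $(C\Vert B\Vert T^\alpha)^n$ with no factorial, so without a restart argument it yields a contraction only on short intervals.

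\textbf{The repair and the comparison.} The fix uses only the paper's tools. By Young, $y^2\le J^\alpha(g^2)+J^\alpha(y^2)$, and the generalized Gronwall inequality gives $\Vert w_\la(t)\Vert_X^2\le C_{\alpha,T}\,J^\alpha(\Vert f\Vert_X^2)(t)$ uniformly in $\la$. This passes to the weak limit by the closed-convex-set argument used for \eqref{(2.4)}. With $f=Bh$ it iterates to $\Vert(\mathcal KB)^nh(t)\Vert_X^2\le(C\Vert B\Vert^2)^nJ^{n\alpha}(\Vert h\Vert_X^2)(t)$. Hence $\Vert(\mathcal KB)^nh\Vert_{L^2(0,T;X)}^2\le (C\Vert B\Vert^2T^\alpha)^n\Vert h\Vert_{L^2(0,T;X)}^2/\Gamma(n\alpha+1)$, which is all Steps 2 and 3 need.

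This repaired estimate is exactly the paper's. The paper never splits off $B$: since $\AAAA+B$ is bounded, it applies Lemma \ref{lem5.2} directly to $\pppa(u_\la-a)=\AAAA u_\la+Bu_\la$. That gives $\Vert u_\la(t)\Vert_X^2\le\Vert a\Vert_X^2+CJ^\alpha(\Vert u_\la\Vert_X^2)(t)$, and it concludes by Gronwall and the weak-limit argument of Section \ref{sec4}. So for existence your fixed-point layer adds nothing. Its real advantage is uniqueness. Your reduction to the unperturbed homogeneous problem is cleaner than the paper's sketch (``similarly to Section \ref{sec3} using the generalized Gronwall inequality''), which would require redoing the resolvent-duality argument with $A+B$ in place of $A$.
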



Thus, we can summarize our main results under assumption \eqref{(1.2)} as follows:
\\
{\bf Theorem 2.8.}
{\it
We assume that \eqref{(1.2)} holds and $B: X \rightarrow X$
is a bounded linear operator on $X$.
\\
(i) For each $a\in X$, there exists a unique weak solution 
$u \in L^2(0,T;X)$ to \eqref{(pert)} and
we can choose a constant $C>0$ such that
$$
\Vert u(t)\Vert_X \le C\Vert a\Vert_X \quad \mbox{for all $a\in X$
and almost all $t \in (0,T)$}.
$$
(ii) For each $a\in \DDD(A)$, there exists a unique strong solution to 
\eqref{(pert)} satisfying $u,\, Au\in C([0,T];X)$ and
we can choose a constant $C>0$ such that 
$$
\Vert Au(t)\Vert_X + \Vert u(t)\Vert_X 
\le C(\Vert Aa\Vert_X + \Vert a\Vert_X) \quad \mbox{for all 
$a \in \DDD(A)$ and almost all $t \in (0,T)$}.
$$
}

Now we give important examples of operators satisfying Condition 
$(\mathcal{M})$.

Throughout the sequel, let $\OOO \subset \R^d$ be a bounded domain with boundary $\ppp\OOO$ of class $C^1$ and $\nu = \nu(x)$ be the outward unit normal vector to $\ppp\OOO$ at $x\in \ppp\OOO$.
\smallskip

\noindent{\bf Example 1 (fractional transport equation).}

Let $p\in L^\infty(\Omega)$ and $h := (h_1, ..., h_d) \in (C^1(\ooo{\OOO}))^d$.
We set
$$
\left\{\begin{array}{rl}
& \ppp\OOO_+:= \{ x \in \ppp\OOO;\, h(x) \cdot \nu(x) > 0\}, \\
& \ppp\OOO_-:= \{ x \in \ppp\OOO;\, h(x) \cdot \nu(x) < 0\}.
\end{array}\right.
$$
We define an operator $A$ by
$$
\left\{ \begin{array}{rl}
& Au := -h(x)\cdot \nabla u + p(x) u \quad \mbox{in $\OOO$},  \\
& \DDD(A):= \{ u\in L^2(\OOO);\, A u \in L^2(\OOO), \, 
u\vert_{\ppp\OOO_-} = 0\}.
\end{array}\right.
$$
We understand $A u$ in the sense of distributions.
If $u,\, A u \in L^2(\OOO)$, then $u\vert_{\ppp\OOO_-}$ can be defined in a suitable sense on 
$\ppp\OOO_-$ (e.g., Bardos \cite{Ba1}, \cite{Ba2}, \cite{Ba3}). 

Next, we define the operator
$$
\widetilde{A} u:=-h(x) \cdot \nabla u-\frac{1}{2}(\operatorname{div} h(x)) u,
$$
with domain
$$
\DDD(\widetilde{A})=\left\{u \in L^2(\Omega);\, h \cdot \nabla u \in 
L^2(\Omega),\left.u\right|_{\partial \Omega_-}=0\right\} .
$$
The operator $\widetilde{A}$ is closed and densely defined. Theorem 2.3 (p.~207) in \cite{Ba3} implies that there exists $\la_0 > 0$ sufficiently large such that $\la_0 - \widetilde{A}$ is surjective. Moreover, by integration by parts (see Proposition 2.5 in \cite{Ba3}), for all $u\in \DDD(\widetilde{A})$,
$$
\begin{aligned}
(\widetilde{A} u, u)_{L^2(\Omega)} & =(-h \cdot \nabla u, u)_{L^2(\Omega)}-\frac{1}{2} \int_{\Omega}(\operatorname{div} h)|u|^2 d x \\
& =-\frac{1}{2} \int_{\partial \Omega_+}(h \cdot \nu)|u|^2 d \sigma \leq 0.
\end{aligned}
$$
By Proposition 2.1.4 in Tanabe \cite{Ta}, we see that $\{ \la \in \C;\, \mbox{Re}\, \la > 0\} \subset \rho(\widetilde{A})$. Then, \cite[Theorem 2.1.3]{Ta} yields that $\widetilde{A}$ satisfies Condition $(\mathcal{M})$. Thus, $\widetilde{A}$ generates a contraction C$_0$ semigroup in $L^2(\OOO)$. We can write $A=\widetilde{A} + B,$ where $B$ is defined on $L^2(\Omega)$ by 
$$
Bu (x) := (p(x)+\frac{1}{2}\operatorname{div} h(x)) u(x).
$$ 
Since $p +\frac{1}{2}\operatorname{div} h \in L^\infty(\Omega)$, the operator $B$ is bounded. Therefore, by Theorem \ref{thmpert}, the fractional transport equation governed by the operator $A$ admits a unique weak solution $u\in L^2(0,T; L^2(\OOO)).$
\smallskip

\noindent{\bf Example 2 (fractional Boltzmann equation).}

Let $U$ be an open set given by $$U:= \{ \xi\in \R^d;\, \chi_0 < \vert \xi\vert 
< \chi_1\}$$
with constants $0<\chi_0 < \chi_1$.
We set 
$$
\begin{cases}
& \Gamma_+:= \{ (x,\xi)\in \ppp\OOO \times U;\, 
\nu(x)\cdot \xi > 0\}, \\
& \Gamma_-:= \{ (x,\xi)\in \ppp\OOO \times U;\, 
\nu(x)\cdot \xi < 0\}.
\end{cases}
$$
Then, we define
\begin{align*}
A_0 u(x,\xi) &:= -\xi\cdot \nabla_x u(x,\xi), \quad (x,\xi)\in \OOO
\times U, \\
A u(x,\xi) &:= -\xi\cdot \nabla_x u(x,\xi) + p(x,\xi)u(x,\xi) + \int_U k(x,\xi,\xi')u(x,\xi') d\xi' ,
\end{align*}
where $p\in L^\infty(\OOO\times U)$ and $k\in L^\infty(\Omega;L^2(U\times U)).$ The domain is given by
$$
\DDD(A)=\DDD(A_0): = \{ u\in L^2(\OOO\times U),\,
A_0 u \in L^2(\OOO\times U) \quad \mbox{and}\quad
u\vert_{\Gamma_-} = 0\}.
$$
Here, we understand $A_0 u$ in the sense of distributions. Note that the trace $u\vert_{\Gamma_-}$ is well-defined in this case; see Dautray and Lions \cite{DaLi}, p.~222. It is known that $A_0$ satisfies \eqref{(1.2)} by \cite[Theorem 2]{DaLi}. Moreover, the operator defined by
$$
B u(x,\xi)=p(x,\xi) u(x,\xi)+\int_U k\left(x, \xi, \xi^{\prime}\right) u\left(x, \xi^{\prime}\right) d \xi^{\prime}
$$
is bounded on $L^2(\Omega \times U)$. Therefore, by Theorem \ref{thmpert}, the fractional Boltzmann equation governed by the operator $A$ admits a unique weak solution $u\in L^2(0,T; L^2(\OOO\times U)).$
\smallskip

Next, we collect the properties of the spaces, which are used for the 
proofs.
\begin{lem}\label{lem3.1}
(i) The operators $\MAAA: \LTLTX \longrightarrow \LTLTX$ with $\DDD(\MAAA)
= L^2(0,T;\DDD(A))$ and 
$\pppa: \LTLTX \longrightarrow \LTLTX$ are closed.
\\
(ii) $\DDD(\MAAA^*) = L^2(0,T;\DDD(A^*))$.
\\
(iii) $C^{\infty}_c(0,T;X) \subset \DDD((\pppa)^*)$.
\\
(iv) The space $\DDD(\MAAA^*) \cap \DDD((\pppa)^*)$ is dense in 
$\LTLTX$.
\\
(v) The space $\DDD(\MAAA^*) \cap \DDD((\pppa)^*)$ is dense in 
$\DDD(\MAAA^*)$ with the graph norm.
\end{lem}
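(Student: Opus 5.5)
Parts (i) and (iii) are standard facts about closedness and density. Part (ii) is an identification of the adjoint of a pointwise-defined operator. Parts (iv) and (v) need a concrete construction. In that construction I will mollify in time and truncate near the endpoints, and use the resolvent of $A^*$ in space.

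For (i), take $u_n\in L^2(0,T;\DDD(A))$ with $u_n\to u$ and $Au_n\to w$ in $\LTLTX$. Passing to a subsequence, $u_n(t)\to u(t)$ and $Au_n(t)\to w(t)$ for a.e. $t$. Closedness of $A$ then gives $u(t)\in\DDD(A)$ and $Au(t)=w(t)$ a.e. Measurability and square integrability are inherited, so $u\in L^2(0,T;\DDD(A))$. For $\pppa=(J^\alpha)^{-1}$, closedness follows because the graph of the inverse of a bounded injective operator is the transposed graph of $J^\alpha$, which is closed.

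For (ii), the inclusion $L^2(0,T;\DDD(A^*))\subset\DDD(\MAAA^*)$ follows by integrating $(Au(t),\va(t))_X=(u(t),A^*\va(t))_X$ in $t$. For the converse, let $\va\in\DDD(\MAAA^*)$ with $\MAAA^*\va=w$. Test with $u(t)=\eta(t)x$, where $\eta\in L^\infty(0,T)$ is scalar and $x\in\DDD(A)$. This gives $\int_0^T\eta\,[(Ax,\va(t))_X-(x,w(t))_X]\,dt=0$ for all $\eta$. Hence $(Ax,\va(t))_X=(x,w(t))_X$ for a.e. $t$, with the null set depending on $x$. Here $\DDD(A)$ with the graph norm is separable provided $X$ is; I assume this, or else reduce to the separable subspace where $\va$ and $w$ essentially take values. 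Taking a countable subset of $\DDD(A)$ that is dense in the graph norm yields one common null set. Therefore $\va(t)\in\DDD(A^*)$ with $A^*\va(t)=w(t)$ a.e.

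For (iii), let $\va\in C_c^\infty(0,T;X)$. Compute $(\pppa v,\va)$ for $v=J^\alpha f$. We have $(f,\va)=(J^\alpha f,\psi)$ with $\psi$ defined as follows: since the adjoint of $J^\alpha$ is the backward integral $J^\alpha_{T-}$, we need $J^\alpha_{T-}\psi=\va$. Take $\psi:=D^\alpha_{T-}\va$, the backward Riemann–Liouville derivative, i.e. $-\frac{d}{dt}J^{1-\alpha}_{T-}\va$. It lies in $\LTLTX$ because $\va$ is smooth and vanishes near $T$. The identity $J^\alpha_{T-}D^\alpha_{T-}\va=\va$ holds because $\va$ vanishes near $T$. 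Thus $(\pppa v,\va)=(v,\psi)$ for all $v$, so $\va\in\DDD((\pppa)^*)$.

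Part (iv) follows from (v) together with density of $\DDD(\MAAA^*)$. The latter holds because $A^*$ is densely defined, $A$ being closed and densely defined, and so $L^2(0,T;\DDD(A^*))$ is dense. For (v), fix $\va\in L^2(0,T;\DDD(A^*))$. I will construct $\va_n\in C_c^\infty(0,T;\DDD(A^*))$ with $\va_n\to\va$ and $A^*\va_n\to A^*\va$ in $\LTLTX$. To do this, regard $\va$ as an element of $L^2(0,T;Y)$, where $Y=\DDD(A^*)$ is a Hilbert space with the graph norm. Cut off by $\chi_n\va$, where $\chi_n$ is the indicator of $[1/n,T-1/n]$, and then mollify in $t$. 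This produces $C_c^\infty(0,T;Y)$ functions converging in $L^2(0,T;Y)$, which is exactly graph-norm convergence. Each such function belongs to $\DDD(\MAAA^*)$ by (ii) and to $\DDD((\pppa)^*)$ by (iii), since $C_c^\infty(0,T;Y)\subset C_c^\infty(0,T;X)$.

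The main obstacle is the adjoint identification in (ii), specifically the countability argument that makes the exceptional null set independent of $x$. The second delicate point is (iii), where one must verify that the backward fractional integral and derivative invert each other on compactly supported smooth functions.
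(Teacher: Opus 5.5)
Your proof is correct. For the closedness of $\mathcal{A}$, for (iv) and for (v) it coincides with the paper's argument:
\begin{itemize}
\item closedness of $\mathcal{A}$: a.e.\ convergence of a subsequence plus closedness of $A$;
\item (iv): follows from (v) together with the density of $L^2(0,T;\mathcal{D}(A^*))$;
\item (v): truncation away from $t=0$ and $t=T$, then time mollification of a $\mathcal{D}(A^*)$-valued function, landing in $C_c^\infty((0,T);\mathcal{D}(A^*))$.
\end{itemize}

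The differences are in the pieces the paper only cites. For closedness of $\partial_t^\alpha$ the paper refers to Kubica--Ryszewska--Yamamoto. You instead observe that, with the definition $\partial_t^\alpha=(J^\alpha)^{-1}$ used here, its graph is the transposed graph of the bounded operator $J^\alpha$, so closedness is immediate.

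For (ii) the paper just points to the reference cited there. Your test functions $\eta(t)x$, combined with a countable graph-norm-dense subset of $\mathcal{D}(A)$, give a self-contained proof that uses only that $A$ is closed and densely defined, not Condition $(\mathcal{M})$. In the nonseparable case, make your reduction explicit as follows:
\begin{itemize}
\item take a separable closed subspace $X_0$ containing the essential ranges of $\varphi$ and $\mathcal{A}^*\varphi$, and let $P$ be the orthogonal projection onto $X_0$;
\item choose a countable dense subset of $\{(Px,PAx);\,x\in\mathcal{D}(A)\}\subset X_0\times X_0$;
\item use that $(Ax,\varphi(t))_X-(x,\mathcal{A}^*\varphi(t))_X=(PAx,\varphi(t))_X-(Px,\mathcal{A}^*\varphi(t))_X$ for a.e.\ $t$.
\end{itemize}

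For (iii), your identity $\varphi=J_\alpha\bigl(-J_{1-\alpha}\varphi'\bigr)$ is exactly the computation the paper performs when proving Lemma~\ref{lem3.2}(i). You combine it directly with $(J^\alpha)^*=J_\alpha$, so you verify the adjoint relation by hand and obtain the explicit formula $(\partial_t^\alpha)^*\varphi=-J_{1-\alpha}\varphi'$. This avoids the inverse-of-adjoint theorem the paper invokes for Lemma~\ref{lem3.2}(ii). The cost is only the standard semigroup property $J_\alpha J_{1-\alpha}=J_1$ and the vanishing of $\varphi$ at $T$.
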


Correspondingly to $J^{\alpha}$, we define 
$$
J_{\alpha}v(t) := \frac{1}{\Gamma(\alpha)}
\int^T_t (\xi-t)^{\alpha-1} v(\xi) d\xi \quad \mbox{for $v\in L^2(0,T;X)$}.
$$
Then, $J_\alpha$ is a bounded injective operator on $L^2(0,T;X)$, and we have:
\begin{lem}\label{lem3.2}
(i) $J_{\alpha}L^2(0,T;X)$ is dense in $L^2(0,T;X)$.
\\
(ii) $(\pppa)^* = (J_{\alpha})^{-1}$ and $\DDD((\pppa)^*) 
= J_{\alpha}L^2(0,T;X)$.
\\
(iii) $(J^{\alpha}f,\, g)_{L^2(0,T;X)}
= (f, \, J_{\alpha}g)_{L^2(0,T;X)}$ for all $f,g \in L^2(0,T;X)$.
\end{lem}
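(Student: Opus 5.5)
The plan is to prove Lemma \ref{lem3.2} in the order (iii), (i), (ii), because (iii) is a Fubini computation and the other two parts follow from it by abstract operator theory.

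\emph{Part (iii).} For $f,g\in L^2(0,T;X)$ I would write
$$
(J^\alpha f,g)_{L^2(0,T;X)}=\frac{1}{\Gamma(\alpha)}\int_0^T\int_0^t (t-s)^{\alpha-1}(f(s),g(t))_X\,ds\,dt .
$$
Exchanging the order of integration over the triangle $\{0<s<t<T\}$ gives $(f,J_\alpha g)_{L^2(0,T;X)}$. Fubini is justified because the integrand is absolutely integrable. Indeed, $\int_0^T\int_0^t (t-s)^{\alpha-1}\Vert f(s)\Vert_X\Vert g(t)\Vert_X\,ds\,dt$ is finite by Young's inequality, since the kernel $t^{\alpha-1}\in L^1(0,T)$ and $\Vert f\Vert_X,\Vert g\Vert_X\in L^2(0,T)$. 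The same Young estimate shows that $J^\alpha$ and $J_\alpha$ are bounded. Hence (iii) says exactly that $J_\alpha=(J^\alpha)^*$ as bounded operators.

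\emph{Part (i).} For a bounded operator $K$ one has $\overline{\mathrm{Ran}\,K}=(\ker K^*)^\perp$. Since $J_\alpha=(J^\alpha)^*$ and $J^\alpha$ is injective, $\ker (J_\alpha)^* = \ker J^\alpha=\{0\}$. Therefore $\overline{J_\alpha L^2(0,T;X)}=L^2(0,T;X)$.

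\emph{Part (ii).} I would use the standard fact that if $K$ is bounded and injective with dense range, and $K^*$ is also injective with dense range, then $(K^{-1})^*=(K^*)^{-1}$. Here $K=J^\alpha$: its range is dense by the earlier statement that $\DDD(\pppa)$ is dense, and $K^*=J_\alpha$ is injective with dense range by (i).

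The two inclusions are checked directly.
\begin{itemize}
\item If $\va=J_\alpha w$ and $u=J^\alpha f\in\DDD(\pppa)$, then by (iii)
$$
(\pppa u,\va)=(f,J_\alpha w)=(J^\alpha f,w)=(u,w).
$$
So $\va\in\DDD((\pppa)^*)$ and $(\pppa)^*\va=w=J_\alpha^{-1}\va$.
\item Conversely, let $\va\in\DDD((\pppa)^*)$ with $w:=(\pppa)^*\va$. For all $f\in L^2(0,T;X)$ we have $(f,\va)=(J^\alpha f,w)=(f,J_\alpha w)$, so $\va=J_\alpha w$.
\end{itemize}
This gives $\DDD((\pppa)^*)=J_\alpha L^2(0,T;X)$ and $(\pppa)^*=(J_\alpha)^{-1}$.

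The only genuinely analytic step is the Fubini/Young justification in (iii), and it is routine. The remaining parts are bookkeeping with adjoints, using the injectivity of $J^\alpha$ and the density of $H_\alpha(0,T;X)$ stated in the paper.
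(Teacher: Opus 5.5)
Your proof is correct. Part (iii) is the same Fubini computation the paper does (recorded there as $(J^\alpha)^*=J_\alpha$), and you add the Young-inequality justification that the paper omits. For (i) and (ii) you take a different route.

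For (i), the paper argues constructively. It mollifies $\varphi$ to get $\widetilde{\varphi_\varepsilon}\in C^\infty_c(0,T;X)$ and then writes $\widetilde{\varphi_\varepsilon}=J_1(-\widetilde{\varphi_\varepsilon}')=J_\alpha J_{1-\alpha}(-\widetilde{\varphi_\varepsilon}')$. You instead use $\overline{\mathrm{Ran}\,J_\alpha}=(\ker J^\alpha)^\perp$ together with the injectivity of $J^\alpha$. Your version is shorter, but it leans on that injectivity, which the paper only asserts. It does follow in one line from $J^{1-\alpha}J^\alpha=J^1$, the mirror image of the identity the paper uses. The paper's version avoids injectivity altogether, and it yields the explicit inclusion $C^\infty_c(0,T;X)\subset J_\alpha L^2(0,T;X)$; combined with (ii), this gives Lemma~\ref{lem3.1}~(iii) directly.

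For (ii), the paper cites Yosida's theorem $((J^\alpha)^{-1})^*=((J^\alpha)^*)^{-1}$, remarking that it is stated for Banach-space duals. Your two-inclusion check from (iii) is self-contained and amounts to proving that theorem in this case. Your direct check needs only the density of $H_\alpha(0,T;X)$, so that $(\partial_t^\alpha)^*$ is single-valued; it does not need (i). So the paragraph invoking the abstract ``standard fact'' can be dropped. The injectivity of $J_\alpha$ also comes for free: $(\partial_t^\alpha)^*J_\alpha w=w$ forces $w=0$ whenever $J_\alpha w=0$.
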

The proofs of Lemmata \ref{lem3.1} and \ref{lem3.2} are sketched in 
Section \ref{sec9}.

Sections \ref{sec5} - \ref{sec7} are devoted to the proofs of Theorems \ref{thm1}, \ref{cor1.4} and \ref{thmpert}, respectively.
As for the main results in the case $a \in X$, the proofs are organized as follows.
\begin{itemize}
\item
Section \ref{sec3} proves the uniqueness of weak solution;
\item
Section \ref{sec4} proves that Condition $(\mathcal{M})$ is equivalent to 
the existence of weak solution with a contraction estimate;
\item
Section \ref{sec5} proves that if $u$ is weak solution, then  
$u\in C([0,T];X)$.
\end{itemize}
 
\section{Proof of the uniqueness of weak solutions} \label{sec3}

In this section, we will prove
\begin{lem}\label{llem3.1}
We assume \eqref{(1.2)}. Let $u\in L^2(0,T;X)$ be a weak solution to $\pppa u = Au$ in $X$ 
for $0<t<T$.  Then, $u=0$.
\end{lem}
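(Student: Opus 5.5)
Uniqueness will follow by a duality argument: we solve a backward adjoint problem and use its solution as a test function in \eqref{(2.3)} with $a=0$. The weak formulation then reads
$$
(u,\,(\pppa)^*\va)_{\LTLTX} = (u,\,\MAAA^*\va)_{\LTLTX}
\qquad \mbox{for all $\va\in \DDD((\pppa)^*)\cap\DDD(\MAAA^*)$}.
$$
By Lemma \ref{lem3.2} (ii), $(\pppa)^* = (J_\alpha)^{-1}$ on $\DDD((\pppa)^*) = J_\alpha\LTLTX$. So $u=0$ will follow if, for every $f$ in a dense subset of $\LTLTX$, we find $\va\in J_\alpha\LTLTX\cap L^2(0,T;\DDD(A^*))$ with $(\pppa)^*\va - A^*\va = f$. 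Then $(u,f)_{\LTLTX}=0$ for all such $f$, hence $u=0$.

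First we note that $A^*$ also satisfies Condition $(\mathcal{M})$. Indeed, $(\la-A^*)^{-1} = ((\la-A)^{-1})^*$ and the adjoint has the same norm, and $A^*$ is closed and densely defined because $X$ is a Hilbert space. To solve the backward problem, which in terms of $\psi := (\pppa)^*\va$ is $\va = J_\alpha\psi$ with $\psi - A^*J_\alpha\psi = f$, I would use the Yosida approximation $A^*_\la := \la A^*(\la-A^*)^{-1}$. This is bounded, so the equation $\psi_\la - A^*_\la J_\alpha\psi_\la = f$ is a Volterra equation (backward in time) with bounded operator coefficient. It has a unique solution $\psi_\la\in\LTLTX$, e.g.\ by the Neumann series, since $J_\alpha$ has a weakly singular kernel and the iterates $(A^*_\la J_\alpha)^n$ have norm at most $\|A_\la^*\|^n T^{n\alpha}/\Gamma(n\alpha+1)$.

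The key estimate, uniform in $\la$, comes from the dissipativity $(A^*_\la v,v)_X\le 0$, which holds because $A^*_\la$ generates a contraction semigroup. It is combined with the coercivity of the fractional integral: for $\va_\la = J_\alpha\psi_\la$ we have $(\psi_\la,\va_\la)_{\LTLTX} = (\psi_\la, J_\alpha\psi_\la)_{\LTLTX}\ge c\,\|\va_\la\|^2_{H^{\alpha/2}}$ type positivity. A more robust route, which I would try first, is the Alikhanov-type inequality $(\pppa w(t),w(t))_X \ge \frac12 \pppa\|w(t)\|_X^2$, applied to the backward problem after the time reversal $t\mapsto T-t$. This converts $J_\alpha$ into $J^\alpha$ and yields $\|\va_\la(t)\|_X^2 \le C J^\alpha\|f\|^2_X$, hence a bound on $\|\va_\la\|_{\LTLTX}$ uniform in $\la$. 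To bound $\psi_\la$ and $A^*_\la\va_\la$ as well, I would take $f$ in the dense class $J_\alpha(L^2(0,T;\DDD(A^*)))$, or smooth compactly supported $f$ with values in $\DDD(A^*)$. Since $A^*_\la$ commutes with the equation, applying $(\la - A^*)^{-1}\la$-type arguments gives bounds on $A^*_\la\va_\la$ in terms of $A^*f$, uniform in $\la$.

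Finally I would pass to the limit $\la\to\infty$ by weak compactness. Along a subsequence $\va_\la\rightharpoonup\va$, $\psi_\la\rightharpoonup\psi$ and $A^*_\la\va_\la\rightharpoonup g$ in $\LTLTX$. Since $J_\alpha$ is bounded, $\va = J_\alpha\psi$. Since $A^*_\la\va_\la = A^*(\la(\la-A^*)^{-1}\va_\la)$ with $\la(\la-A^*)^{-1}\va_\la - \va_\la\to0$ weakly, the closedness of $\MAAA^*$ from Lemma \ref{lem3.1} (weakly closed graph) gives $\va\in L^2(0,T;\DDD(A^*))$ and $g = A^*\va$. Hence $\psi - A^*\va = f$, as required. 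The main obstacle is the uniform-in-$\la$ estimate, namely the fractional energy inequality for the backward problem together with the control of $A^*_\la\va_\la$. If the Alikhanov inequality proves awkward in the $J_\alpha$ setting, I would instead use the positivity $(J_\alpha w,w)_{\LTLTX}\ge0$, which comes from the positivity of the kernel $t^{\alpha-1}$, to get the $L^2$ bounds directly.
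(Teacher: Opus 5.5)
Your duality argument is correct, but it takes a genuinely different route from the paper. You show that $(\pppa)^*-\MAAA^*$ has dense range on $\DDD((\pppa)^*)\cap\DDD(\MAAA^*)$. This amounts to an existence theory for the time-reversed adjoint problem $\pppa\tilde\va=A^*\tilde\va+\tilde f$ with zero initial value, and it needs several ingredients:
\begin{itemize}
\item a Yosida approximation of $A^*$ with a uniform energy bound;
\item a second bound for $A^*_\la\va_\la$, obtained by applying $A^*_\la$ to the equation and using $\Vert A^*_\la x\Vert_X\le\Vert A^*x\Vert_X$ for $x\in\DDD(A^*)$;
\item weak closedness of the graph of $\MAAA^*$, which holds because $\MAAA^*$ is an adjoint and hence closed, not because of Lemma \ref{lem3.1} (i).
\end{itemize}
The paper needs no existence theory at all. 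It regularizes the weak solution $w$ itself by setting $z:=(\la-A)^{-1}w$. Since $(\la-A^*)^{-1}$ commutes with $J_\alpha$, it maps $\DDD((\pppa)^*)$ into $\DDD((\pppa)^*)\cap L^2(0,T;\DDD(A^*))$ and commutes with $(\pppa)^*$. This gives $(z,(\pppa)^*\psi)_{\LTLTX}=(Az,\psi)_{\LTLTX}$ for all $\psi\in\DDD((\pppa)^*)$. Because $((\pppa)^*)^*=\pppa$, it follows that $z\in\DDD(\pppa)$ with $\pppa z=Az$. The coercivity $(\pppa z,z)_{\LTLTX}\ge c\Vert z\Vert^2_{\LTLTX}$ together with $(Az,z)_X\le 0$ then forces $z=0$, hence $w=0$. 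The paper's route takes a few lines and uses the resolvent at a single $\la$. Yours essentially repeats Section \ref{sec4} for $A^*$ with a source term. In exchange, it yields solvability of the backward adjoint problem as a by-product, and it still works when no operator commuting with both $\pppa$ and $A$ is available.

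Your fallback based only on $(J_\alpha w,w)_{\LTLTX}\ge 0$ would not give the uniform bound. Pairing $\psi_\la-A^*_\la J_\alpha\psi_\la=f$ with $\va_\la=J_\alpha\psi_\la$ yields only $0\le(\psi_\la,\va_\la)_{\LTLTX}\le(f,\va_\la)_{\LTLTX}$, which bounds nothing. Positivity of the operator $J_\alpha$ also does not follow from pointwise positivity of its kernel. What you need is the coercive form $(\psi,J_\alpha\psi)_{\LTLTX}\ge c\Vert J_\alpha\psi\Vert^2_{\LTLTX}$, which is the time-reversed version of the inequality from \cite{KRY} that the paper uses. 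With it you get $\Vert\va_\la\Vert_{\LTLTX}\le c^{-1}\Vert f\Vert_{\LTLTX}$ directly. Applying the same inequality to $A^*_\la\psi_\la$ gives $\Vert A^*_\la\va_\la\Vert_{\LTLTX}\le c^{-1}\Vert A^*f\Vert_{\LTLTX}$. This avoids the $W^{1,1}$ regularity that Lemma \ref{lem5.2} would require. If you instead keep Lemma \ref{lem5.2}, take $f\in C^\infty_c((0,T);\DDD(A^*))$ so that the Neumann series for $\tilde\va_\la$ can be differentiated termwise and $\tilde\va_\la(0)=0$.
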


\noindent{\bf Proof of Lemma \ref{llem3.1}.}\\
First, we assume that $u \in L^2(0,T;\DDD(A))\cap \DDD(\pppa)$ is a weak solution to $\pppa u = Au$. Then, we have $(\pppa u, u)_{\LTLTX} \ge \frac{T^{-\alpha}}{2\Gamma(1-\alpha)}\Vert u\Vert^2_{\LTLTX}$ (e.g., Theorem 3.3 (ii) in \cite{KRY}).
Since $(Au(t),\, u(t))_X \le 0$ for almost all $t \in (0,T)$ by Lemma \ref{lem9.1}, we obtain $0 \ge \frac{T^{-\alpha}}{2\Gamma(1-\alpha)}\Vert u\Vert^2_{\LTLTX},$ and so $u=0$ in $\LTLTX$. Thus the uniqueness is proved within $L^2(0,T;\DDD(A)) \cap \DDD(\pppa)$.
 
Next, let $w\in L^2(0,T;X)$ satisfy 
\begin{equation}\label{(4.1)}
(w, \, (\partial_t^\alpha)^* \varphi)_{L^{2}(0,T;X)}
= (w, \mathcal{A}^* \varphi)_{L^{2}(0,T;X)}
\end{equation}
for all $\varphi \in \DDD((\partial_t^\alpha)^*) \cap L^2(0,T;\DDD(A^*))$. We need to prove that $w=0$.

Fix $\lambda>0$ and define $z:=(\lambda -A)^{-1} w$. Then,
\begin{equation}\label{(4.2)}
z\in L^2(0,T;\DDD(A)), \quad Az=\lambda z-w \quad\text{in }
L^2(0,T;X).
\end{equation}
We will show that
\begin{equation}\label{(4.3)}
z\in \DDD(\partial_t^\alpha) \quad\text{and}\quad 
\partial_t^\alpha z=Az.
\end{equation}
Let $\psi\in \DDD((\partial_t^\alpha)^*)$.
Since $\DDD((\partial_t^\alpha)^*) = J_{\alpha}L^2(0,T;X)$ by 
Lemma \ref{lem3.2} (ii), 
there exists $g\in L^2(0,T;X)$ such that $\psi=J_{\alpha}g$.
Because $(\la-A^*)^{-1}$ is bounded on $X$ and acts only in $X$, 
it commutes with the integral defining $J_{\alpha}$. Hence,
$$
(\la-A^*)^{-1}\psi = (\la-A^*)^{-1}J_{\alpha}g = J_{\alpha}
(\la-A^*)^{-1}g.
$$
Consequently, $(\la-A^*)^{-1}\psi \in \DDD((\partial_t^\alpha)^*)$ and
\begin{equation}\label{(4.4)}
(\partial_t^\alpha)^*(\la-A^*)^{-1}\psi 
= (\la-A^*)^{-1}(\partial_t^\alpha)^*\psi.
\end{equation}
Moreover, $(\la-A^*)^{-1}\psi\in L^2(0,T;\DDD(A^*)),$ so that 
$(\la-A^*)^{-1}\psi$ is an admissible test function in \eqref{(4.1)}.

Using \eqref{(4.1)} and \eqref{(4.4)}, we obtain
\begin{align*}
& (z,\, (\partial_t^\alpha)^*\psi)_{L^2(0,T;X)} 
= ((\la-A)^{-1}w,\, (\partial_t^\alpha)^*\psi)_{L^2(0,T;X)} \\
= & (w,\, (\la-A^*)^{-1}(\partial_t^\alpha)^*\psi)_{L^2(0,T;X)} 
  = (w,\, (\partial_t^\alpha)^* (\la-A^*)^{-1}\psi)
_{L^2(0,T;X)}                                     \\ 
= & (w,\, A^*(\la-A^*)^{-1}\psi)_{L^2(0,T;X)}.
\end{align*}
For the last equality, we used \eqref{(4.1)}, that is, the fact that 
$w$ is a weak solution.
Hence, since $((\la-A^*)^{-1})^* = (\la - A)^{-1}$, we have 
\begin{align*}
& (z,\, (\partial_t^\alpha)^*\psi)_{L^2(0,T;X)} 
= (w,\, \la(\la-A^*)^{-1}\psi - \psi)_{L^2(0,T;X)} \\
= & (\lambda (\la - A)^{-1}w - w,\, \psi)_{L^2(0,T;X)} 
= (\lambda z-w,\, \psi)_{L^2(0,T;X)} = (Az,\, \psi)_{L^2(0,T;X)}.
\end{align*}
That is, we obtain
$$
(z, \, (\pppa)^*\psi)_{\LTLTX} = (Az,\, \psi)_{\LTLTX}
\quad \mbox{for all $\psi \in \DDD((\pppa)^*)$}.
$$
Thus, by the definition of the adjoint operator, we have shown that $z\in \DDD(((\pppa)^*)^*)$ and $((\pppa)^*)^*z=Az$. Since $\pppa$ is densely defined and closed, $((\pppa)^*)^*=\pppa$. Therefore, we have proved \eqref{(4.3)}.
By the uniqueness of the strong solution, which we already proved, 
we obtain $z=0$. Since $(\la - A)^{-1}$ is injective, we reach $w=0$. 
This proves the uniqueness of the weak solution.
$\blacksquare$

\section{Proof of Theorem \ref{thm1}}\label{sec4}

\noindent{\bf 4.1. Proof of the sufficiency.}

Thanks to Lemma \ref{llem3.1}, it suffices to 
prove that Condition $(\mathcal{M})$ implies the 
existence of a weak solution which belongs to $C([0,T];X)$.

\noindent{\bf First Step: construction of approximating solutions.}
\\
Now, we construct a sequence approximating a weak solution. 
Let $\la > 0$ be a parameter. We set 
$$
\JJJJ:= \la(\la-A)^{-1}, \quad 
\AAAA:= \la A(\la-A)^{-1} = -\la + \la\JJJJ.
$$
The operators $\JJJJ$ and $\AAAA$ are bounded and defined on $X$, and $\AAAA$ is called the Yosida approximation.
Then, the following is known (\cite{Pa}, \cite{Ta}, for example).
\begin{lem}\label{lem5.1}
Let us assume that $A$ satisfies \eqref{(1.2)}. Then,
\\
(i) $\lim_{\la\to\infty} \JJJJ b = b$ in $X$ for all $b\in X$.
\\
(ii) $\lim_{\la\to\infty} \AAAA b = Ab$ in $X$ for all $b\in \DDD(A)$.
\\
(iii)  $\lim_{\la\to\infty} \AAAA^* b = A^*b$ in $X$ for all 
$b\in \DDD(A^*)$.
\\
(iv) $\Vert \JJJJ \Vert_{X\to X} \le 1$ for all $\la > 0$.
\\
(v) $(A_{\la}u,u)_X \le 0$ for all $\la > 0$ and $u\in X$.
\\
(vi) $\DDD(A^2)$ is dense in $X$.
\end{lem}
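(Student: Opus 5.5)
The plan is to derive all six items from the resolvent bound $\la\Vert(\la-A)^{-1}\Vert\le 1$ in \eqref{(1.2)}, the density of $\DDD(A)$, and the algebraic identities $\JJJJ - I = A(\la-A)^{-1}$ on $X$ and $\JJJJ b - b = (\la-A)^{-1}Ab$ for $b\in\DDD(A)$. These are the standard arguments of Pazy \cite{Pa} and Tanabe \cite{Ta}.

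\emph{Items (iv), (i), (ii).} Item (iv) is immediate, since $\Vert \JJJJ\Vert = \la\Vert(\la-A)^{-1}\Vert\le 1$. For (i), I first take $b\in\DDD(A)$. Then $\Vert\JJJJ b-b\Vert_X = \Vert(\la-A)^{-1}Ab\Vert_X\le \la^{-1}\Vert Ab\Vert_X\to 0$. The uniform bound (iv) and the density of $\DDD(A)$ then give (i) by a $3\ep$-argument. For (ii), with $b\in\DDD(A)$ I use $\AAAA b = \JJJJ Ab$, because $A$ commutes with its resolvent on $\DDD(A)$. Applying (i) to $Ab$ yields $\AAAA b\to Ab$.

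\emph{Item (iii).} Here I will show that $A^*$ also satisfies Condition $(\mathcal M)$. Since $A$ is closed and densely defined, $A^*$ is closed and densely defined; density uses that $X$ is a Hilbert space, hence reflexive. Moreover $(\la-A^*)^{-1}=((\la-A)^{-1})^*$, so $\la\in\rho(A^*)$ and $\la\Vert(\la-A^*)^{-1}\Vert\le1$. One also has $\AAAA^* = \la A^*(\la-A^*)^{-1}$, which is the Yosida approximation of $A^*$. Applying (ii) to $A^*$ then gives (iii). This step needs the most care, because one must check the identity $(\la-A)^{-1*}=(\la-A^*)^{-1}$ and the density of $\DDD(A^*)$. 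Both are standard facts for closed densely defined operators on Hilbert spaces.

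\emph{Item (v).} I write $(\AAAA u,u)_X = -\la\Vert u\Vert^2_X + \la(\JJJJ u,u)_X$. By Cauchy--Schwarz and (iv), this is at most $-\la\Vert u\Vert_X^2+\la\Vert u\Vert_X^2=0$.

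\emph{Item (vi).} Let $b\in X$. By (i), $\JJJJ b\to b$. Moreover $\JJJJ b\in\DDD(A)$, but not necessarily in $\DDD(A^2)$. I therefore apply the approximation twice: $\JJJJ^2 b = \JJJJ(\JJJJ b)\in\DDD(A^2)$, because $(\la-A)^{-1}$ maps $\DDD(A)$ into $\DDD(A^2)$. Then
\[
\Vert\JJJJ^2b-b\Vert_X\le\Vert\JJJJ(\JJJJ b-b)\Vert_X+\Vert\JJJJ b-b\Vert_X\le 2\Vert\JJJJ b-b\Vert_X\to0
\]
by (iv) and (i), which proves density.
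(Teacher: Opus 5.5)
Your proof is correct. For (i)--(v) you give the standard arguments that the paper only cites from Pazy and Tanabe. Your proof of (v) coincides with the one written out in Section~\ref{sec9}. Your treatment of (iii) also matches what the paper uses in the Third Step of Section~\ref{sec4}: you show that $A^*$ itself satisfies Condition $(\mathcal{M})$ via $(\la-A^*)^{-1}=((\la-A)^{-1})^*$, and then recognize $\AAAA^*=\la A^*(\la-A^*)^{-1}$ as the Yosida approximation of $A^*$; the paper relies on the same adjoint-resolvent identity there.

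The genuine difference is in (vi). The paper cites Pazy's Theorem 2.7, which works through the semigroup $e^{tA}$ and therefore needs the Hille--Yosida generation theorem. It uses smoothed vectors $\int_0^\infty \va(s)e^{sA}x\,ds$ to show that even $\bigcap_n \DDD(A^n)$ is dense. Your argument is $\JJJJ^2 b\in\DDD(A^2)$ together with $\Vert \JJJJ^2 b-b\Vert_X\le 2\Vert \JJJJ b-b\Vert_X\to 0$. It uses only the resolvent bound and items (i) and (iv), so it is self-contained. Iterating with $\JJJJ^m$ gives density of $\DDD(A^m)$ for each fixed $m$, but not of the intersection. Only $\DDD(A^2)$ is used in Section~\ref{sec5}, so nothing is lost.
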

The proofs can be found, e.g., in \cite{Pa} and \cite{Ta}.
For convenience, in Section \ref{sec9}, 
we provide the proof of $(v)$. The assertion $(vi)$ follows from 
\cite[Theorem 2.7]{Pa}.

Here we recall the Caputo derivative defined by
$$
\ddda v(t) = \frac{1}{\Gamma(1-\alpha)}\int^t_0 
(t-s)^{-\alpha} v'(s) ds \quad 
\mbox{for $v \in W^{1,1}(0,T;X)$.}
$$
Moreover, we can prove
\begin{lem}\label{lem5.2}
$$
(J^{\alpha}(\ddda v, v)_X)(t) \ge \frac{1}{2}\Vert v(t)\Vert_X^2
- \frac{1}{2}\Vert v(0)\Vert^2_X, \quad 0<t<T
$$
for $v \in W^{1,1}(0,T;X)$.
\end{lem}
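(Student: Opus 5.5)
The plan is to reduce the inequality to a pointwise one inside the integral and then integrate by parts in a regularized kernel. Set $w := v - v(0)$, so that $w' = v'$, $w(0)=0$ and $\ddda v = \ddda w$.

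The key pointwise inequality, for almost every $s$, is
\[
(\ddda v(s), v(s))_X \ge \frac12 \ddda\bigl(\Vert v\Vert_X^2\bigr)(s).
\]
This is the Hilbert-space analogue of the Alikhanov inequality. Once it is available, apply $J^{\alpha}$ to both sides; $J^\alpha$ preserves order since its kernel is positive. We have $J^\alpha \ddda f = f - f(0)$ for $f\in W^{1,1}(0,T)$. Taking $f = \Vert v\Vert_X^2$, which lies in $W^{1,1}$ because $v\in W^{1,1}(0,T;X)$ is continuous, gives
\[
J^\alpha(\ddda v,v)_X(t) \ge \tfrac12\Vert v(t)\Vert_X^2 - \tfrac12\Vert v(0)\Vert_X^2 .
\]
This is exactly the claim.

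To prove the pointwise inequality, write, with $k(s) = s^{-\alpha}/\Gamma(1-\alpha)$,
\[
(\ddda v(t),v(t))_X - \tfrac12\ddda\Vert v\Vert_X^2(t) = \int_0^t k(t-s)\bigl(v'(s), v(t)-v(s)\bigr)_X\,ds .
\]
This uses $\frac{d}{ds}\Vert v(s)\Vert^2 = 2(v'(s),v(s))$. Now set $g(s) := \Vert v(t)-v(s)\Vert_X^2$. Then $g'(s) = -2(v'(s),v(t)-v(s))$, so the integral equals $-\frac12\int_0^t k(t-s) g'(s)\,ds$. Integrating by parts,
\[
-\tfrac12\bigl[k(t-s)g(s)\bigr]_{s=0}^{s=t} + \tfrac12\int_0^t \partial_s k(t-s)\, g(s)\, ds .
\]
Here $\partial_s k(t-s) = \alpha (t-s)^{-\alpha-1}/\Gamma(1-\alpha) \ge 0$. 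At $s=0$ the boundary term equals $+\frac12 k(t) g(0)\ge 0$. So the whole expression is nonnegative, provided the boundary term at $s=t$ vanishes.

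That boundary term at $s=t$ is the main obstacle, since $k$ is singular there. I would handle it by integrating only on $[0,t-\ep]$, which produces the boundary term $-\frac12 k(\ep) g(t-\ep)$. For this to vanish we need $g(t-\ep) = \Vert v(t)-v(t-\ep)\Vert^2 = o(\ep^{\alpha})$. This holds at every Lebesgue point $t$ of $v'$, because there $\Vert v(t)-v(t-\ep)\Vert = O(\ep)$. The remaining integral term is monotone in $\ep$, so its limit exists, possibly $+\infty$. The identity above for the left-hand side is valid a.e. since $k * v'$ is defined a.e. Hence the inequality holds almost everywhere, which suffices for applying $J^\alpha$.

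An alternative, if this route gets delicate, is to first prove everything for $v\in C^1([0,T];X)$ and then pass to $W^{1,1}$ by density. For the density step one uses that $J^\alpha$ and $\ddda$ are continuous on $L^1$ and that $W^{1,1}\subset C([0,T];X)$. The right-hand side is then handled via the $W^{1,1}$ norm, and the left-hand side via $L^1$ convergence of $\ddda v_n$ together with uniform convergence of $v_n$.
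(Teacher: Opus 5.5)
Your proposal is correct and follows the standard argument behind the result the paper invokes; the paper gives no proof of its own and only cites Theorem 3.2 (i) of \cite{KRY}. That argument is the Alikhanov-type pointwise inequality $(\ddda v, v)_X \ge \frac12 \ddda \Vert v\Vert_X^2$ a.e., followed by applying the order-preserving operator $J^{\alpha}$ and using $J^{\alpha}\ddda f = f - f(0)$ for $f = \Vert v\Vert_X^2 \in W^{1,1}(0,T)$, and your Lebesgue-point control $\frac12 k(\ep)\Vert v(t)-v(t-\ep)\Vert_X^2 = O(\ep^{2-\alpha})$ of the singular boundary term validly handles $v \in W^{1,1}(0,T;X)$ directly, without a $C^1$ density step.
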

The proof can be found, e.g., in Theorem 3.2 (i) in \cite{KRY}.
\smallskip

Now, we construct weak solutions via the Yosida approximation. 
For $\la>0$, we consider an approximating system:
\begin{equation}\label{(5.1)}
\pppa (u_\la-a) = \AAAA u_\la \quad \mbox{in $X$ for $0<t<T$.}
\end{equation}
Then, we prove
\begin{lem}\label{lem5.3}
For each $\la > 0$ and $a \in X$, there exists a unique solution 
$u_{\la}$ to \eqref{(5.1)}  such that $u_{\la}-a \in H_{\alpha}(0,T;X)$.
Moreover, 
$$
u_{\la} \in W^{1,1}(0,T;X), \quad u_{\la}(0) = a.
$$
\end{lem}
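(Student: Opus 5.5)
Since $\AAAA$ is bounded on $X$, I would treat \eqref{(5.1)} as a linear Volterra equation and solve it by a fixed point argument. Set $v := u_\la - a$. Then $v\in H_\alpha(0,T;X)$ with $\pppa v = \AAAA v + \AAAA a$. Because $\pppa = (J^\alpha)^{-1}$, this is equivalent to
\[
v = J^{\alpha}\AAAA v + J^{\alpha}\AAAA a ,
\]
to be solved for $v\in L^2(0,T;X)$. Any $L^2$ solution of this integral equation automatically satisfies $v = J^\alpha(\AAAA v + \AAAA a)\in J^\alpha L^2(0,T;X) = H_\alpha(0,T;X)$. The converse also holds, so \eqref{(5.1)} with $u_\la - a\in H_\alpha(0,T;X)$ is exactly this integral equation.

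\textbf{Existence and uniqueness.} Set $M := \Vert\AAAA\Vert_{X\to X}\le 2\la$. Write $K v := J^\alpha \AAAA v$. Since $\AAAA$ commutes with the integral, induction gives
\[
\Vert (K^n v)(t)\Vert_X \le M^n \bigl(J^{n\alpha}\Vert v\Vert_X\bigr)(t).
\]
The Young convolution inequality then yields
\[
\Vert K^n\Vert_{L^2\to L^2} \le \frac{M^n T^{n\alpha}}{\Gamma(n\alpha+1)}.
\]
This bound is summable in $n$, so the Neumann series $\sum_n K^n$ converges in operator norm. Hence $I-K$ is invertible on $L^2(0,T;X)$, which gives existence and uniqueness. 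Equivalently, $v = \sum_{n\ge1}\frac{t^{n\alpha}}{\Gamma(n\alpha+1)}\AAAA^n a$, that is, $u_\la(t) = E_{\alpha,1}(t^\alpha \AAAA)a$, where $E_{\alpha,1}$ is the Mittag-Leffler function. The series converges in operator norm on $X$ uniformly on $[0,T]$.

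\textbf{Regularity.} This explicit series is the convenient route. Each term $t^{n\alpha}\AAAA^n a/\Gamma(n\alpha+1)$ is continuous on $[0,T]$ and vanishes at $t=0$, so $u_\la\in C([0,T];X)$ and $u_\la(0)=a$. For $W^{1,1}$, differentiate term by term:
\[
u_\la'(t) = \sum_{n\ge1}\frac{t^{n\alpha-1}}{\Gamma(n\alpha)}\AAAA^n a .
\]
Its $L^1(0,T;X)$ norm is bounded by
\[
\sum_{n\ge 1}\frac{T^{n\alpha}M^n}{\Gamma(n\alpha+1)}\Vert a\Vert_X<\infty .
\]
Because $n\alpha-1>-1$, each term is in $L^1$. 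Absolute convergence in $L^1(0,T;X)$ of the differentiated series, together with uniform convergence of the original series, gives $u_\la\in W^{1,1}(0,T;X)$. One checks this rigorously by writing each partial sum as $a + \int_0^t(\text{partial sum of derivatives})\,ds$ and passing to the limit.

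The main obstacle is justifying that the $L^2$ fixed point coincides with the series and has this $W^{1,1}$ regularity: the derivative behaves like $t^{\alpha-1}$ near $0$, so it is not in $L^2$, and one must work in $L^1$. Uniqueness within the class $u_\la - a\in H_\alpha$ follows from the Neumann series invertibility. An alternative to the Neumann series is Lemma \ref{lem5.2} combined with Lemma \ref{lem5.1}(v), but the contraction-type argument above already suffices.
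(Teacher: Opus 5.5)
Your proof is correct, but it takes a different route from the paper's. The paper never solves for $u_\la$ in $L^2$. It formally differentiates the integral equation and runs a Picard iteration for $w=u_\la'$ directly in $L^1(0,T;X)$, bounding $\Vert w_{n+1}-w_n\Vert_{L^1(0,T;X)}$ by $C_0C_1^n/\Gamma((n+1)\alpha+1)$ and setting $u_\la'=w$. Uniqueness is taken from the coercivity inequality of Lemma \ref{lem5.2}, so it relies on $(\AAAA u,u)_X\le 0$, and $u_\la(0)=a$ comes from an $L^\infty$ bound.

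You instead invert $I-J^\alpha\AAAA$ on $L^2(0,T;X)$ by a Neumann series. This gives existence and uniqueness at once, in exactly the class $u_\la-a\in H_\alpha(0,T;X)$ of the statement, using only boundedness of $\AAAA$. You then read off $W^{1,1}$ regularity and $u_\la(0)=a$ from the explicit representation $u_\la=E_{\alpha,1}(t^\alpha\AAAA)a$.

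Your route makes explicit some things the paper leaves implicit:
\begin{itemize}
\item It is automatic that your limit solves \eqref{(5.1)}. The paper still has to integrate the $w$-equation and use $J^1J^\alpha=J^\alpha J^1$ to see that $a+J^1w$ satisfies the integral equation.
\item Your uniqueness covers arbitrary $H_\alpha$-solutions. Lemma \ref{lem5.2} is formulated for $W^{1,1}$ functions, so taken literally it needs extra regularity of the difference of two solutions, or else the $H_\alpha$-coercivity used in Section \ref{sec3}.
\item Your argument applies verbatim to \eqref{ypert}, where $\AAAA+B$ is no longer dissipative.
\end{itemize}
The paper's route, for its part, avoids explicit series and produces $u_\la'$ directly, which is what is fed into Lemma \ref{lem5.2} afterwards.

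Finally, the step you flag as the main obstacle is immediate. The partial sums of the Neumann series applied to $J^\alpha\AAAA a$ are exactly the partial sums of your Mittag-Leffler series. These converge uniformly on $[0,T]$ and hence in $L^2(0,T;X)$, so the two limits coincide.
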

\noindent{\bf Proof of Lemma \ref{lem5.3}.}
\\
To construct $u_\la,$ we write \eqref{(5.1)} as
\begin{align*}
& u_{\la}(t) = a + \int^t_0 \frac{1}{\Gamma(\alpha)}
(t-s)^{\alpha-1} \AAAA u_{\la}(s) ds   \\
&= a + \int^t_0 \frac{1}{\Gamma(\alpha)}
s^{\alpha-1} \AAAA u_{\la}(t-s) ds, \quad 0<t<T.
\end{align*}
Henceforth, we denote
$$
u_{\la}'(t) := \frac{du_{\la}}{dt}(t), \quad \mbox{etc.}
$$
Formally, we differentiate both sides with respect to 
$t$, so that we can obtain a candidate equation for 
$u_{\la}'$:
$$
u_{\la}'(t) = \frac{1}{\Gamma(\alpha)}t^{\alpha-1} \AAAA a
+ \int^t_0 \frac{1}{\Gamma(\alpha)}
  s^{\alpha-1} \AAAA u_{\la}'(t-s) ds.
$$
Thus, to prove the existence of $u_{\la}' \in L^1(0,T;X)$,
we introduce the following iteration:
$$
\left\{ \begin{array}{rl}
& w_0(t) := 0, \quad w_1(t) := \frac{1}{\Gamma(\alpha)}t^{\alpha-1} \AAAA a, \\
& w_{n+1}(t) = \frac{1}{\Gamma(\alpha)}t^{\alpha-1} \AAAA a
+ \int^t_0 \frac{1}{\Gamma(\alpha)} s^{\alpha-1} \AAAA w_{n}(t-s) ds,
\quad n\in \N.
\end{array}\right.
$$
We note that $\AAAA: X \longrightarrow X$ is a bounded operator and 
$\Vert \AAAA\Vert_{X\to X} \le 2\la$.

We can estimate
\begin{align*}
& \Vert (w_{n+2} - w_{n+1})(t)\Vert 
\le \frac{1}{\Gamma(\alpha)}\int^t_0 s^{\alpha-1} \Vert \AAAA\Vert_{X\to X}
\Vert (w_{n+1} - w_n)(t-s)\Vert ds \\
\le & C\int^t_0 (t-s)^{\alpha-1} 
\Vert (w_{n+1} - w_n)(s)\Vert ds.
\end{align*}
Here $C>0$ depends on $\la$.

First,
$$
\Vert (w_1-w_0)(s)\Vert_X = \Vert w_1(s)\Vert_X
= \left\Vert \frac{1}{\Gamma(\alpha)}(\AAAA a)s^{\alpha-1} 
\right\Vert_X \le Ms^{\alpha-1},
$$ 
where we set $M:=\frac{1}{\Gamma(\alpha)}\Vert \AAAA a\Vert_X$.
Next,
$$
\Vert (w_2-w_1)(t)\Vert_X 
\le CM\int^t_0 (t-s)^{\alpha-1}s^{\alpha-1} ds
= CM\frac{\Gamma(\alpha)^2}{\Gamma(2\alpha)}t^{2\alpha-1},
$$
and
$$
\Vert (w_3-w_2)(t)\Vert_X 
\le C^2M\frac{\Gamma(\alpha)^2}{\Gamma(2\alpha)}
\int^t_0 (t-s)^{\alpha-1}s^{2\alpha-1} ds
= \frac{C^2M\Gamma(\alpha)^3}{\Gamma(3\alpha)}t^{3\alpha-1}
$$
for $0<t<T$.
Continuing the estimation, we obtain
$$
\Vert (w_{n+1} - w_n)(t)\Vert_X 
\le \frac{M\Gamma(\alpha)(C\Gamma(\alpha))^n}
{\Gamma((n+1)\alpha)}t^{(n+1)\alpha-1},
$$
and so 
$$
\Vert w_{n+1} - w_n\Vert_{L^1(0,T;X)} 
\le \frac{M\Gamma(\alpha)(C\Gamma(\alpha))^nT^{(n+1)\alpha}}
{\Gamma((n+1)\alpha+1)}
\le C_0\frac{C_1^n}{\Gamma((n+1)\alpha+1)}
$$
for all $n\in \N$, where $C_0, C_1 > 0$ are constants independent of
$n\in \N$.
Hence, for any $N', N\in \N$ with $N'>N$, we have 
\begin{align*}
& \Vert w_{N'}- w_N\Vert_{L^1(0,T;X)} 
= \Vert (w_{N'} - w_{N'-1}) + \cdots + (w_{N+1}-w_N)\Vert_{L^1(0,T;X)} \\
\le & \Vert w_{N'} - w_{N'-1}\Vert_{L^1(0,T;X)} 
  + \cdots + \Vert w_{N+1} - w_N\Vert_{L^1(0,T;X)} \\
\le& \sum_{k=N}^{N'-1} \frac{C_0C_1^k}{\Gamma((k+1)\alpha+1)} .
\end{align*}
Stirling's formula yields that 
$$
\lim_{N',N\to\infty} \Vert w_{N'} - w_N\Vert_{L^1(0,T;X)} = 0.
$$
Therefore, there exists $w \in L^1(0,T;X)$ such that
$\lim_{N\to\infty} w_N = w=:u'_\la$ in $L^1(0,T;X)$. This means that $u_{\la} \in W^{1,1}(0,T;X)$. The uniqueness of $u_\la$ follows from the coercivity Lemma \ref{lem5.2}.

Moreover, the Sobolev embedding implies that 
$\Vert u_{\la}\Vert_{L^{\infty}(0,T;X)} < \infty$.
Consequently, 
\begin{align*}
& \left\Vert \int^t_0 \frac{1}{\Gamma(\alpha)}
(t-s)^{\alpha-1}\AAAA u_{\la}(s) ds\right\Vert_X  
\le C\int^t_0 (t-s)^{\alpha-1}\Vert \AAAA\Vert_{X\to X}
\Vert u_{\la}(s)\Vert_X ds\\  
\le & C\Vert \AAAA\Vert_{X\to X} 
\Vert u_{\la}\Vert_{L^{\infty}(0,T;X)} \int^t_0 (t-s)^{\alpha-1}ds
\, \longrightarrow 0
\end{align*}
as $t \to 0$, which implies $u_{\la}(0) = a$.
$\blacksquare$
\smallskip
\\
{\bf Second Step: uniform boundedness in $\la$ of $u_{\la}$.}
\\
Since $u_{\la} \in W^{1,1}(0,T;X)$ and $u_{\la}(0) = a$, 
we see that $\pppa (u_{\la} - a) = \ddda (u_{\la} - a)$
(e.g., \cite{KRY}).
By $\ddda (u_{\la} - a) = \ddda u_{\la} - \ddda a = \ddda u_{\la}$, 
we have
\begin{equation}\label{(5.2)}
\pppa (u_{\la} - a) = \ddda (u_{\la}-a) = \ddda u_{\la}.
\end{equation}
Therefore, \eqref{(5.1)} is equivalent to
\begin{equation}\label{(5.3)}
\ddda u_{\la} = \AAAA u_{\la}, \quad u_{\la}(0) = a.
\end{equation}
Applying Lemmata \ref{lem5.2} and \ref{lem5.1} (v) to \eqref{(5.3)}, we obtain
\begin{align*}
& \frac{1}{2}\Vert u_{\la}(t)\Vert^2_X
- \frac{1}{2}\Vert a\Vert_X^2 \le (J^{\alpha}(\ddda u_{\la},\,
u_{\la})_X)(t) \\
&= \frac{1}{\Gamma(\alpha)}\int^t_0 (t-s)^{\alpha-1}
(\AAAA u_{\la}(s),\, u_{\la}(s))_X ds \le 0, \quad 0<t<T.
\end{align*}
Hence, we deduce
\begin{equation}\label{(5.4)}
\Vert u_{\la}(t)\Vert_X \le \Vert a\Vert_X \quad
\mbox{for $0<t<T$.}
\end{equation}
Thus, we proved:
\begin{lem}\label{lem5.4}
$$
\sup_{\la>0} \Vert u_{\la}(t)\Vert_X 
\le \Vert a\Vert_X< \infty \quad \mbox{for all $0<t<T$.}
$$
\end{lem}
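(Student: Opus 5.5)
The statement of Lemma \ref{lem5.4} is essentially a restatement of the pointwise bound \eqref{(5.4)} just derived, so the plan is to assemble that argument and check that each step is legitimate for every fixed $\la>0$. First, by Lemma \ref{lem5.3}, for each $\la>0$ the approximate solution satisfies $u_\la\in W^{1,1}(0,T;X)$ with $u_\la(0)=a$. By \eqref{(5.2)}, $\pppa(u_\la-a)$ then coincides with the Caputo derivative $\ddda u_\la$, so \eqref{(5.1)} becomes \eqref{(5.3)}: $\ddda u_\la=\AAAA u_\la$ with $u_\la(0)=a$.

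Next, I would take the $X$-inner product of \eqref{(5.3)} with $u_\la(t)$. The left side $(\ddda u_\la(t),u_\la(t))_X$ lies in $L^1(0,T)$, since $\ddda u_\la=J^{1-\alpha}u_\la'\in L^1(0,T;X)$ and $u_\la\in L^\infty(0,T;X)$. Applying $J^\alpha$ to it, Lemma \ref{lem5.2} gives the lower bound $\frac12\Vert u_\la(t)\Vert_X^2-\frac12\Vert a\Vert_X^2$.

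For the right side, the function $(\AAAA u_\la(s),u_\la(s))_X$ is nonpositive for almost every $s$ by Lemma \ref{lem5.1} (v), which applies because $\AAAA$ is bounded on all of $X$. Since the kernel $(t-s)^{\alpha-1}$ is positive, $J^\alpha$ of this function is $\le 0$. Combining the two sides yields $\Vert u_\la(t)\Vert_X\le\Vert a\Vert_X$. This bound is independent of $\la$, and taking the supremum over $\la>0$ finishes the proof.

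The only delicate point is whether the inequality holds for \emph{all} $t$ rather than almost all $t$. The function $u_\la\in W^{1,1}(0,T;X)$ is absolutely continuous. The identity $J^\alpha\ddda u_\la=u_\la-a$ then gives continuity of the left side, and $J^\alpha$ of an $L^\infty$ function is continuous. Hence an almost-everywhere inequality extends to every $t\in(0,T)$ by continuity. I expect no real obstacle, since the substantive work (existence in $W^{1,1}$ and the coercivity inequality) is contained in Lemmas \ref{lem5.3} and \ref{lem5.2}.
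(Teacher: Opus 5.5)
Your proposal is correct and follows the paper's argument: use Lemma \ref{lem5.3} to rewrite \eqref{(5.1)} in the Caputo form \eqref{(5.3)}, then combine the coercivity Lemma \ref{lem5.2} with the dissipativity of $\AAAA$ (Lemma \ref{lem5.1}~(v)) and the positivity of the kernel of $J^{\alpha}$. Your extra step upgrading an a.e.\ bound to every $t$ is a harmless refinement the paper leaves implicit; continuity of $t\mapsto\Vert u_\la(t)\Vert_X$, which follows from $u_\la\in W^{1,1}(0,T;X)$, already suffices for it.
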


Lemma \ref{lem5.4} implies
$$
\sup_{\la>0} \Vert u_{\la}\Vert_{L^2(0,T;X)} \le T^{\frac{1}{2}}\Vert a\Vert_X
< \infty.
$$
Since $L^2(0,T;X)$ is reflexive, we can find $u \in L^2(0,T;X)$ and extract a subsequence indexed by $\{ \la_n\}_{n\in \N}\subset \{\la>0\}, \, \lambda_n \to \infty,$ such that
\begin{equation}\label{(5.5)}
u_n:= u_{\la_n} \, \rightharpoonup \, u \quad 
\mbox{weakly in $L^2(0,T;X)$}.
\end{equation}
Furthermore, the property of the weak limit yields
\begin{equation}\label{(5.6)}
\Vert u\Vert_{L^2(0,T;X)} \le \liminf_{n\to \infty}
\Vert u_n\Vert_{L^2(0,T;X)} \le T^{\frac{1}{2}}\Vert a\Vert_X.
\end{equation}
\\
{\bf Third Step.}
\\
Finally, we have to verify that $u$ is a weak solution. Write $A_n:= A_{\la_n}$ and $u_n := u_{\la_n}$. Then, since $u_n$ satisfies $\pppa (u_n -a) = A_nu_n$ in $X$ for $0<t<T$, we obtain
\begin{equation}\label{(5.7)}
( \pppa (u_n-a), \, \va)_{\LTLTX}
= (A_nu_n, \, \va)_{\LTLTX}
\end{equation}
for all $\va \in \DDD((\pppa)^*) \cap L^2(0,T;\DDD(A^*))$.
By $\va\in \DDD((\pppa)^*)$, we see that 
$$
\mbox{[the left-hand side of \eqref{(5.7)}]}
= (u_n-a, (\pppa)^*\va)_{\LTLTX}.
$$
Since $A_n$ is a bounded operator on $X$, we see
$$
\mbox{[the right-hand side of \eqref{(5.7)}]}
= (u_n, A_n^*\va)_{\LTLTX}.
$$
Hence,
\begin{equation}\label{(5.8)}
(u_n-a,\, (\pppa)^*\va)_{\LTLTX} = (u_n,\, A_n^*\va)_{\LTLTX}
\end{equation}
for $n\in \N$. In view of \eqref{(5.5)}, letting $n\to \infty$, we obtain
$$
(u-a,\, (\pppa)^*\va)_{\LTLTX} = \lim_{n\to \infty}(u_n,\, A_n^*\va)_{\LTLTX}
$$
for all $\va \in \DDD((\pppa)^*) \cap L^2(0,T;\DDD(A^*))$.

On the other hand, we can consider
$$
 \mbox{[the right-hand side of \eqref{(5.8)}]}
 = (u_n, A^*\va)_{\LTLTX} + (u_n, \, A_n^*\va - A^*\va)_{\LTLT},
$$
and
$$
\lim_{n\to \infty}  \mbox{[the right-hand side of \eqref{(5.8)}]}
= (u, A^*\va)_{\LTLTX} 
+ \lim_{n\to\infty} (u_n, \, A_n^*\va - A^*\va)_{\LTLT}.
$$
Moreover,
$$
\lim_{n\to \infty}\Vert A_n^*\va - A^*\va\Vert_{\LTLTX} = 0.
$$
Indeed, we have
$$
\Vert A_n^*\va - A^*\va\Vert_{\LTLTX}^2
= \int^T_0 \Vert A_n^*\va(t) - A^*\va(t)\Vert^2_X
dt.
$$
We see that $\va(t) \in \DDD(A^*)$ for almost all 
$t\in (0,T)$ by $\va\in L^2(0,T;\DDD(A^*))$.
Hence, Lemma \ref{lem5.1} (iii) implies that 
$A^*_n \va(t) - A^*\va(t) \longrightarrow 0$ 
as $n\to \infty$ for almost all $t\in (0,T)$.
Furthermore, 
\begin{align*}
\Vert A_n^*\va(t) - A^*\va(t)\Vert_X
&\le \Vert A_n^*\va(t)\Vert_X + \Vert A^*\va(t)\Vert_X;\\
\Vert A_n^*\va(t)\Vert_X 
&= \Vert \la_n(\la_n-A^*)^{-1}A^*\va(t)\Vert_X  \\
&\le \la_n\Vert (\la_n - A^*)^{-1}\Vert_{X\to X} \Vert A^*\va(t)
\Vert_X
\le \Vert A^*\va(t)\Vert_X
\end{align*}
by \eqref{(1.2)} for $A^*$, since $(\la_n - A^*)^{-1}=((\la_n - A)^{-1})^*$.
Therefore, 
$$
\Vert A_n^*\va(t) - A^*\va(t)\Vert_X
\le 2\Vert A^*\va(t)\Vert_X \in L^2(0,T)
$$
by $A^*\va \in \LTLTX$. Consequently, the Lebesgue dominated convergence theorem implies 
$$
\lim_{n\to \infty} \Vert A_n^*\va - A^*\va\Vert_{\LTLTX}
= 0,
$$
so that the right-hand side of \eqref{(5.7)} converges to $(u, A^*\va)_{\LTLTX}$ as $n \to \infty$ by using
\begin{lem}\label{lem5.5}
Let $u_n \rightharpoonup u$ weakly in $\LTLTX$ and $\lim_{n\to\infty} \Vert w_n - w\Vert_{\LTLTX} = 0$.
Then,
\\
$\lim_{n\to \infty} (u_n,\, w_n)_{\LTLTX}
= (u,\, w)_{\LTLTX}$.
\end{lem}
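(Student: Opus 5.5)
The plan is to write the difference of inner products so that one piece uses the weak convergence and the other uses the strong convergence together with a uniform bound. Concretely, I would decompose
\begin{align*}
(u_n, w_n)_{\LTLTX} - (u, w)_{\LTLTX}
= (u_n, w_n - w)_{\LTLTX} + (u_n - u, w)_{\LTLTX}.
\end{align*}

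The second term tends to $0$ as $n\to\infty$ directly from the definition of weak convergence $u_n \rightharpoonup u$ in the Hilbert space $\LTLTX$, tested against the fixed element $w\in\LTLTX$.

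For the first term I apply the Cauchy--Schwarz inequality in $\LTLTX$:
$$
\vert (u_n, w_n - w)_{\LTLTX}\vert \le \Vert u_n\Vert_{\LTLTX}\,\Vert w_n - w\Vert_{\LTLTX}.
$$
It then suffices to know that $\sup_n \Vert u_n\Vert_{\LTLTX} < \infty$. This follows from the uniform boundedness principle (Banach--Steinhaus), since a weakly convergent sequence in a Hilbert space is bounded. In the application above, the bound is also available explicitly from Lemma \ref{lem5.4}: $\Vert u_n\Vert_{\LTLTX}\le T^{\frac12}\Vert a\Vert_X$. Combined with $\Vert w_n - w\Vert_{\LTLTX}\to 0$, the first term tends to $0$.

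There is no real obstacle. The only point that needs justification rather than a direct computation is the boundedness of a weakly convergent sequence, and the uniform boundedness principle (or the explicit bound in our setting) supplies it.
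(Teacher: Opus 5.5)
Your proof is correct: the splitting $(u_n,w_n-w)_{\LTLTX}+(u_n-u,w)_{\LTLTX}$, combined with the uniform boundedness of weakly convergent sequences and Cauchy--Schwarz, is the standard argument, and the paper itself states Lemma~\ref{lem5.5} without proof as a well-known fact. Your remark that Lemma~\ref{lem5.4} supplies the explicit bound $\Vert u_n\Vert_{\LTLTX}\le T^{1/2}\Vert a\Vert_X$ in the application is a valid shortcut, but for the lemma as stated the Banach--Steinhaus bound is the one you need.
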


Consequently,
$$
(u-a,\, (\pppa)^*\va)_{\LTLTX}
= (u, \, \MAAA^*\va)_{\LTLTX}
$$
for all $\va \in \DDD((\pppa)^*) \cap \DDD(\MAAA^*)$.

Now we prove \eqref{(2.4)}. Let $T>0$ be arbitrary, and define
$$
\mathcal{U}:=\{v\in L^2(0,T;X); \|v(t)\|_X \le \|a\|_X \text{ for a.e. } 
t\in (0,T) \}.
$$
The set $\mathcal{U}$ is convex and closed in $L^2(0,T;X).$ Indeed, if $u_n \to u$ in $L^2(0,T;X),$ there exists a subsequence $\{u_{n_k}\}$ such that $u_{n_k}(t) \to u(t)$ in $X$ for almost every $t\in (0,T)$ (see, e.g., \cite[Lemma 3.22(1), p.~57]{Alt16}). Hence, $\mathcal{U}$ is weakly closed. On the other hand, let $u_n$ be as in \eqref{(5.5)} such that $u_n \, \rightharpoonup \, u$ weakly in $L^2(0,T;X).$ In view of \eqref{(5.4)}, we have $u_n \in \mathcal{U}$ for every $n\in \N.$ Therefore, $u\in \mathcal{U}$.
Moreover, $u\in C([0,T];X)$ follows from Theorem \ref{thmcont} whose proof is independently done of Theorem \ref{thm1}.
$\blacksquare$

Thus, we have proved that Condition $(\mathcal{M})$ yields the 
existence of a weak solution.
\smallskip

\noindent{\bf 4.2. Proof of the necessity of Condition $(\mathcal{M})$.}
\\
{\bf First Step.}
\\
We will prove that $\la - A$ is injective for all $\la > 0$. Let $\va \in \DDD(A)$ satisfy $A\va = \la \va$ for $\la>0$.
Set $u(t) := E_{\alpha,1}(\la t^{\alpha})\va$ for $t>0$. Then, we can easily verify that $\ddda u = Au$ and 
$u(0) = \va$, that is, $\pppa (u-\va) = Au$. We assume that $\va \ne 0$, that is,  $\Vert \va\Vert \ne 0$.

The assumption \eqref{(2.4)} implies $\Vert E_{\alpha,1}(\la t^{\alpha})\va\Vert \le \Vert \va\Vert$ for all $t >0$. Then, $\Vert \va\Vert
\ne 0$ yields
$$
\vert E_{\alpha,1}(\la t^{\alpha})\vert \le 1 \quad 
\mbox{for all $t>0$.}
$$
This is impossible because $E_{\alpha,1}(\la t^{\alpha})>1$ for $t>0$ and $\la >0$. Therefore, $\va = 0$, 
that is, $\la - A$ is injective for $\la > 0$.
\\
{\bf Second Step.}
\\
We set $\whwh{u}(p) := \int^{\infty}_0 e^{-pt}u(t) dt$ for the Laplace transform, provided that the 
right-hand side is defined.

Let $a \in \DDD(A).$ Theorem \ref{cor1.4}~(i) implies that the weak solution to \eqref{(2.2)} is a strong solution. By \eqref{(2.4)}, we can apply the Laplace transform to $\pppa (u_a(t)-a)$
(e.g., Yamamoto \cite[Theorem 2.7.2]{Ya26}). Since $\pppa (u_a - a) = Au_a$ in $(0,T)$ for 
any $T > 0$ and $A$ is closed, using \eqref{(2.4)}, we obtain
$$
\whwh{u}_a(p) \in \DDD(A),\qquad p^{\alpha}\whwh{u}_a(p) - p^{\alpha-1}a = A\whwh{u}_a(p)
\quad \mbox{for all $p>0$}, 
$$
that is, setting $\la = p^{\alpha}$, we have 
$$
(\la - A)\whwh{u}_a(\la^{\frac{1}{\alpha}}) 
= \la^{\frac{\alpha-1}{\alpha}}a
\quad \mbox{for all $\la > 0$.}
$$
Therefore, $\DDD(A) \subset \mathcal{R}(\la -A),$ the range of $\la -A$. Since $\la - A$ is injective, for all $a \in \DDD(A)$ and $\la>0$,
we infer 
$$
(\la - A)^{-1}a = \la^{\frac{1-\alpha}{\alpha}}\whwh{u}_a
(\la^{\frac{1}{\alpha}}).
$$
Since 
$$
\Vert \whwh{u}_a(\la^{\frac{1}{\alpha}})\Vert 
= \left\Vert \int^{\infty}_0 \exp(-\la^{\frac{1}{\alpha}}t)
u_a(t) dt \right\Vert 
\le \Vert a\Vert \int^{\infty}_0 \exp(-\la^{\frac{1}{\alpha}}t) dt 
= \Vert a\Vert \la^{-\frac{1}{\alpha}},
$$
by \eqref{(2.4)}, we obtain
\begin{equation}\label{(2)}
\Vert (\la - A)^{-1}a\Vert \le \frac{1}{\la}\Vert a\Vert
\quad \mbox{for all $a\in \DDD(A)$.}
\end{equation}

Next, we prove that $\mathcal{R}(\la -A)=X.$ Let $x\in X$ and choose $a_n\in \DDD(A)$ such that $a_n \to x$ ($\DDD(A)$ is dense in $X$). By \eqref{(2)}, we see that $(\la - A)^{-1}a_n$ is a Cauchy sequence in $X$. Then $(\la - A)^{-1}a_n \to y \in X.$ On the other hand, $(\la - A)^{-1}a_n\in \DDD(A)$ and $(\la - A) [(\la - A)^{-1}a_n] \to x.$ Since $\la - A$ is closed, $y\in \DDD(A)$ and $(\lambda -A)y=x.$

Therefore, $\{ \la \in \R; \, \la> 0\} \subset \rho(A).$ Since $\DDD(A)$ is dense in $X$, \eqref{(2)} holds for all $a \in X$.
Thus we obtain Condition $(\mathcal{M})$.
$\blacksquare$
\smallskip

\section{Proof of Theorem \ref{thmcont}}\label{sec5}

For $\lambda>0$, let $u_\lambda^a$ be the solution of the approximating problem
\begin{equation}\label{eq:approx-cont}
    \partial_t^\alpha(u_\lambda^a-a)
        =A_\lambda u_\lambda^a,
    \qquad
    u_\lambda^a(0)=a.
\end{equation}
We recall that $u_\lambda^a\in W^{1,1}(0,T;X)\subset C([0,T];X)$ and
\begin{equation}
    \|u_\lambda^a(t)\|_X\leq \|a\|_X,
    \qquad 0\leq t\leq T.
    \label{eq:approx-contraction}
\end{equation}

We first consider initial data $b\in \DDD(A^2)$. For such $b$, one has
\begin{equation}
    \|A^2u_\lambda^b(t)\|_X
        \leq \|A^2b\|_X,
    \qquad 0\leq t\leq T.
    \label{eq:A2-est}
\end{equation}
Indeed, we can show that $A_\lambda \DDD(A^2) \subset \DDD(A^2)$ and $ A^2 A_\lambda =A_\lambda A^2$
on $\DDD(A^2)$. Hence, $u_\lambda^b(t) \in \DDD(A^2)$ and $A^2u_\lambda^b$ satisfies
\[
    \partial_t^\alpha
       \left(A^2u_\lambda^b-A^2b\right)
       =A_\lambda A^2u_\lambda^b,
\]
and \eqref{eq:A2-est} follows from the contraction estimate \eqref{eq:approx-contraction}.

Moreover, for every $c\in \DDD(A^2)$,
\begin{equation}
    \|A_\lambda c-Ac\|_X
        \leq \frac{1}{\lambda}\|A^2c\|_X.
    \label{eq:Yosida-rate}
\end{equation}
Indeed,
$$
    A_\lambda c-Ac=(\mathcal{J}_\lambda-1)Ac,
$$
while, for $c_0\in \DDD(A)$,
$$
    (\mathcal{J}_\lambda-1)c_0
       =(\lambda-A)^{-1}Ac_0.
$$
Since $\|(\lambda-A)^{-1}\|_{X\to X}\leq \frac{1}{\lambda},$ estimate \eqref{eq:Yosida-rate} follows.
\\

Let now $\lambda,\mu>0$.  We set
\[
    v_{\lambda,\mu}
       :=u_\lambda^b-u_\mu^b.
\]
Then
\[
    \partial_t^\alpha v_{\lambda,\mu}
       =(A_\lambda-A_\mu)u_\lambda^b
          +A_\mu v_{\lambda,\mu},
    \qquad
    v_{\lambda,\mu}(0)=0.
\]
Since $A_\mu$ is dissipative by Lemma \ref{lem5.1} $(v)$, 
Lemma \ref{lem5.2} and Young's inequality yield
\begin{align}
    \|v_{\lambda,\mu}(t)\|_X^2
    &\leq
    C\int_0^t
       (t-s)^{\alpha-1}
       \|(A_\lambda-A_\mu)u_\lambda^b(s)\|_X^2\,ds
       \nonumber\\
    &\quad
    +C\int_0^t
       (t-s)^{\alpha-1}
       \|v_{\lambda,\mu}(s)\|_X^2\,ds .
    \label{eq:diff-est}
\end{align}
Here we also used 
$$
\vert ((A_{\lambda}-A_{\mu})u_{\la}^b(s), \, v_{\la,\mu}(s))_X\vert 
\le \frac{1}{2}( \Vert (A_{\lambda}-A_{\mu})u_{\la}^b(s)\Vert_X^2
+ \Vert v_{\la,\mu}(s)\Vert_X^2).
$$
By \eqref{eq:Yosida-rate} and \eqref{eq:A2-est}, we have
\begin{align*}
    \|(A_\lambda-A_\mu)u_\lambda^b(s)\|_X
    &\leq
      \|(A_\lambda-A)u_\lambda^b(s)\|_X
      +\|(A-A_\mu)u_\lambda^b(s)\|_X\\
    &\leq
      \left(\frac1\lambda+\frac1\mu\right)
      \|A^2u_\lambda^b(s)\|_X
    \leq
      \left(\frac1\lambda+\frac1\mu\right)
      \|A^2b\|_X.
\end{align*}
Therefore, the generalized Gronwall inequality applied to
\eqref{eq:diff-est} gives
\begin{equation}\label{eq:Cauchy-regular}
    \|u_\lambda^b-u_\mu^b\|_{C([0,T];X)}
    \leq
    C_{\alpha,T}
    \left(\frac1\lambda+\frac1\mu\right)
    \|A^2b\|_X.
\end{equation}
Thus $\{u_\lambda^b\}_{\lambda>0}$ is Cauchy in $C([0,T];X)$ whenever $b\in \DDD(A^2)$.

We now remove the assumption $b\in \DDD(A^2)$. First, we note that $\DDD(A^2)$ is dense in $X$ (Lemma \ref{lem5.1}~$(vi)$). 
By the linearity and \eqref{eq:approx-contraction}, for any $a,b\in X$,
\begin{equation}
    \|u_\lambda^a-u_\lambda^b\|_{C([0,T];X)}
       \leq \|a-b\|_X.
    \label{eq:data-cont}
\end{equation}
We claim that $\{u_\lambda^a\}_{\lambda>0}$ is Cauchy in $C([0,T];X)$ for every $a\in X$.

Indeed, let $\varepsilon>0$. By the density of $\DDD(A^2)$ in $X$, choose $b\in \DDD(A^2)$ such that $\|a-b\|_X<\varepsilon.$ Then, using \eqref{eq:data-cont},
we have
\begin{align*}
    \|u_\lambda^a-u_\mu^a\|_{C([0,T];X)}
    &\leq
      \|u_\lambda^a-u_\lambda^b\|_{C([0,T];X)}
      +\|u_\lambda^b-u_\mu^b\|_{C([0,T];X)}
      +\|u_\mu^b-u_\mu^a\|_{C([0,T];X)}\\
    &\leq
      2\|a-b\|_X
      +\|u_\lambda^b-u_\mu^b\|_{C([0,T];X)}.
\end{align*}
By \eqref{eq:Cauchy-regular}, we obtain
\[
    \limsup_{\lambda,\mu\to\infty}
    \|u_\lambda^a-u_\mu^a\|_{C([0,T];X)}
    \leq 2\|a-b\|_X
    <2\varepsilon.
\]
Since $\varepsilon>0$ is arbitrary, $\{u_\lambda^a\}_{\lambda>0}$ is Cauchy in $C([0,T];X)$.
Consequently, there exists $\widetilde u\in C([0,T];X)$ such that
\begin{equation}
    u_\lambda^a\longrightarrow\widetilde u
    \qquad\text{in }C([0,T];X).
    \label{eq:uniform-conv}
\end{equation}
In particular, $u_\lambda^a\rightarrow\widetilde u \text{ strongly in }L^2(0,T;X).$

We now identify $\widetilde u$ with the weak solution $u$.
For every $\va\in D((\partial_t^\alpha)^*)\cap L^2(0,T;D(A^*)),$ the approximating solutions satisfy
\[
    (u_\lambda^a-a,(\partial_t^\alpha)^*\va)_{L^2(0,T;X)}
       =
    (u_\lambda^a,A_\lambda^*\va)_{L^2(0,T;X)}.
\]
Since $u_\lambda^a\to\widetilde u
        \quad\text{in }L^2(0,T;X)$ and $A_\lambda^*\va\to A^*\va
        \quad\text{in }L^2(0,T;X),$ letting $\lambda\to\infty$ gives
\[
    (\widetilde u-a,(\partial_t^\alpha)^*\va)_{L^2(0,T;X)}
       =
    (\widetilde u,A^*\va)_{L^2(0,T;X)}.
\]
Thus $\widetilde u$ is a weak solution of \eqref{(2.2)}. By uniqueness of the weak solution, $\widetilde u=u \text{ in }L^2(0,T;X).$
Hence $u\in C([0,T];X)$.

Finally, since $u_\lambda^a(0)=a$ and the convergence \eqref{eq:uniform-conv} is uniform, $u(0)=a.$ This completes the proof. 
$\blacksquare$
\smallskip

\section{Proof of Theorem \ref{cor1.4}}\label{sec6}

(i) Let $a\in \DDD(A)$. Then, by the assumption of the theorem, 
it follows that there exists a unique weak solution $v\in L^2(0,T;X)$ corresponding to the initial datum $Aa$.
More precisely,
$$
(v-Aa,\, (\pppa)^*\psi)_{\LTLTX} 
= (v,\, \mathcal{A}^*\psi)_{\LTLTX} \quad \mbox{for all $\psi 
\in \DDD((\pppa)^*) \cap \DDD(\mathcal{A}^*)$}.
$$

Define $z:=a+J^\alpha v.$ Then $z-a=J^\alpha v\in H_\alpha(0,T;X)$
and
\begin{equation}\label{eqlm1}
\partial_t^\alpha(z-a)=v.
\end{equation}
We next prove that $z\in \DDD(\mathcal{A})$ and $\mathcal{A}z=v$. Let $\va\in \DDD(\mathcal{A}^*)=L^2(0,T;\DDD(A^*))$ be arbitrary and set $\psi:=J_\alpha\va$. Since $\DDD((\partial_t^\alpha)^*)=J_\alpha L^2(0,T;X),$
we have
\[
    \psi\in \DDD((\partial_t^\alpha)^*) \qquad\mbox{and}\quad 
    (\partial_t^\alpha)^*\psi=\va.
\]
Moreover, since $J_\alpha$ acts only on the time variable, it commutes with $\mathcal{A}^*$, and hence
\[
    \psi\in \DDD(\mathcal{A}^*) \qquad\mbox{and}\quad
    \mathcal{A}^*\psi=J_\alpha \mathcal{A}^*\va.
\]
Using $\psi$ as a test function in the weak formulation satisfied by
$v$, we obtain
\begin{equation}\label{eqlm2}
    (v-Aa,\va)_{L^2(0,T;X)}
    =
    (v,J_\alpha \mathcal{A}^*\va)_{L^2(0,T;X)}.
\end{equation}
On the other hand, since $(J^\alpha)^*=J_\alpha,$ we have
\[
    (J^\alpha v,\mathcal{A}^*\va)_{L^2(0,T;X)}
    =
    (v,J_\alpha \mathcal{A}^*\va)_{L^2(0,T;X)}.
\]
Combining this identity with \eqref{eqlm2}, we obtain
\[
    (J^\alpha v,\mathcal{A}^*\va)_{L^2(0,T;X)}
    =
    (v-Aa,\va)_{L^2(0,T;X)}.
\]
Therefore, since $z=a + J^{\alpha}v$, we have
\begin{align*}
    (z,\mathcal{A}^*\va)_{L^2(0,T;X)}
    &=
    (a,\mathcal{A}^*\va)_{L^2(0,T;X)}
    +(J^\alpha v,\mathcal{A}^*\va)_{L^2(0,T;X)} \\
    &=
    (Aa,\va)_{L^2(0,T;X)}
    +(v-Aa,\va)_{L^2(0,T;X)} \\
    &=
    (v,\va)_{L^2(0,T;X)}.
\end{align*}
Since this holds for every $\va\in \DDD(\mathcal{A}^*)$, the definition of
$(\mathcal{A}^*)^*$ yields
\[
    z\in \DDD((\mathcal{A}^*)^*),
    \qquad
    (\mathcal{A}^*)^*z=v.
\]
Since $\mathcal{A}$ is densely defined and closed, it follows that 
$(\mathcal{A}^*)^*=\mathcal{A}.$ Consequently,
\begin{equation}\label{eqlm3}
    z\in \DDD(\mathcal{A}),
    \qquad
    \mathcal{A}z=v.
\end{equation}
Combining \eqref{eqlm1} and \eqref{eqlm3}, we obtain
\[
    \partial_t^\alpha(z-a)=Az.
\]
Therefore, $z$ is a strong solution to \eqref{(2.2)} with initial datum $a$. In particular, $z$ is also a weak solution. By the uniqueness of the weak solution corresponding to $a$, we have $z=u_a.$ This completes the proof.

(ii) It can be easily proved by induction, repeating the arguments in 
Theorem \ref{cor1.4} (i).
$\blacksquare$
\smallskip

\section{Proof of Theorem \ref{thmpert}}\label{sec7}

Consider the Yosida approximation of the problem, i.e.,
\begin{equation}\label{ypert}
\pppa (\uull -a ) = \AAAA\uull + B \uull.
\end{equation}
Similarly to Theorem \ref{thm1}, one can prove that the solution $\uull \in W^{1,1}(0,T; X)$ to \eqref{ypert} uniquely exists.

For $\uull \in W^{1,1}(0,T;X)$, we see that \eqref{ypert} is equivalent to 
$$
\ddda \uull = \AAAA\uull + B\uull, \qquad \uull(0) = a.
$$
Hence, 
$$
(\ddda \uull(s), \, \uull(s))_X 
= (\AAAA\uull(s), \, \uull(s))_X + (B\uull(s),\,\uull(s))_X.
$$
We note that:
$$
\mbox{if $f(s) \ge g(s)$ for $0\le s \le T$, then 
$(J^{\alpha}f)(t) \ge (J^{\alpha}g)(t)$ for $0\le t \le T$.}
$$
Since $(\AAAA\uull(s), \, \uull(s))_X \le 0$, we have
\begin{equation}\label{(eq3)}
(J^{\alpha}(\ddda \uull(s), \, \uull(s))_X)(t)\le (J^{\alpha}(B\uull,\,\uull)_X)(t).
\end{equation}
On the other hand, since 
$$
\vert (B\uull, \, \uull)_X\vert \le \Vert B\uull\Vert_X\Vert \uull\Vert_X
\le C\Vert \uull\Vert^2_X,
$$
we obtain
$$
(J^{\alpha}(B\uull,\, \uull)_X)(t)
\le CJ^{\alpha}(\Vert \uull(s)\Vert_X^2)(t)
= C\int^t_0 (t-s)^{\alpha-1}\Vert \uull(s)\Vert^2_X ds
$$
In \eqref{(eq3)}, applying this and the coercivity (Lemma \ref{lem5.2}), we have
$$
\Vert \uull(t)\Vert^2_X \le \Vert a\Vert^2_X + C\int^t_0 (t-s)^{\alpha-1}\Vert \uull(s)\Vert^2_X ds.
$$
The generalized Gronwall inequality implies 
$$
\Vert \uull(t)\Vert^2_X \le C\Vert a\Vert^2_X \quad \mbox{for almost all 
$t\in (0,T)$}.
$$
As before, passing to the limit, we obtain that the weak limit of a subsequence of
$\{ u_{\la}\}$ is a weak solution. This yields the corresponding estimate of the solution. The uniqueness of the solution can be proved similarly to Section \ref{sec3} using the generalized Gronwall inequality. $\blacksquare$

\section{Concluding remarks}\label{sec8}

In this article, we discuss initial value problems for time-fractional evolution equations $\pppa (u-a) = Au + Bu$, where $A$ is a generator of a contraction C$_0$ semigroup in a real Hilbert space $X$ and $B$ is a bounded linear operator on $X$, and establish the well-posedness. Our main result is a generalization of the Hille-Yosida theorem. The key of the proof is the Yosida approximation, but unlike $\alpha=1$, we cannot rely on the structure of the exponential function, so we prove uniform boundedness of approximating solutions, which is similar to the Galerkin method.

The key ideas can be described as follows:
\begin{itemize}
\item 
construct approximating solutions using the Yosida approximation;
\item 
prove uniform boundedness of approximating solutions in $L^2(0,T;X)$, which yields a weakly convergent subsequence. The weak limit is then proved to be a weak solution.
\end{itemize}

The first step of approximating the original equation by a finite-dimensional or a well-posed system admits several possibilities:
\begin{itemize}
\item
Galerkin method;
\item
elliptic regularization;
\item
Yosida approximation.
\end{itemize}

The Galerkin method is effective especially in the case where $A$ is of elliptic type or $A$ is a generator of 
an analytic C$_0$ semigroup, and for the fractional partial differential equations, we can refer, for example, to Kubica, Ryszewska and Yamamoto \cite{KRY}. However, in the case where $A$ is in a class of generators of C$_0$ semigroups, an application of 
the Galerkin method must overcome several difficulties. Although we can apply the elliptic regularization,
our proof relies on the Yosida approximation (e.g., \cite{Pa}, \cite{Ta}).

\section{Proofs of useful Lemmata}\label{sec9}
First, we prove
\begin{lem}\label{lem9.1}
Let $A$ satisfy Condition $(\mathcal{M})$. Then,
$$
(Aw,w)_X \le 0 \quad \mbox{for all $w\in \DDD(A)$.}
$$
\end{lem}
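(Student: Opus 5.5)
The natural route is the classical resolvent argument: write $w$ as the limit of $\la(\la-A)^{-1}w$ and use the contraction bound in Condition $(\mathcal{M})$ to control $(Aw,w)_X$ from above. Throughout we use that $X$ is a real Hilbert space; in the complex case the same computation applies to real parts.

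Fix $w\in\DDD(A)$ and $\la>0$, and put $f:=(\la-A)w\in X$, so that $w=(\la-A)^{-1}f$. By $(\mathcal{M})$ we have $\la\Vert w\Vert_X\le \Vert f\Vert_X=\Vert \la w-Aw\Vert_X$. Squaring and expanding the right-hand side gives
$$
\la^2\Vert w\Vert_X^2 \le \la^2\Vert w\Vert_X^2 - 2\la (Aw,w)_X + \Vert Aw\Vert_X^2 .
$$
Cancelling the term $\la^2\Vert w\Vert_X^2$ on both sides and dividing by $2\la>0$ leaves
$$
(Aw,w)_X \le \frac{1}{2\la}\Vert Aw\Vert_X^2 .
$$
Since $\Vert Aw\Vert_X$ is finite and does not depend on $\la$, letting $\la\to\infty$ yields $(Aw,w)_X\le 0$.

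There is no real obstacle in this argument. The only care needed is in the expansion of the square: it must be taken with $\la$ real and positive, so that the cross term is exactly $-2\la(Aw,w)_X$, and then the division by $2\la$ and the limit $\la\to\infty$ are legitimate.
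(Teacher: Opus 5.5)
Your proof is correct and follows essentially the same route as the paper, which also squares the resolvent bound $\lambda\Vert w\Vert_X\le\Vert(\lambda-A)w\Vert_X$ to obtain $(Aw,w)_X\le\frac{1}{2\lambda}\Vert Aw\Vert_X^2$ and then lets $\lambda\to\infty$. The only cosmetic difference is that you start from $w\in\mathcal{D}(A)$ and set $f=(\lambda-A)w$, whereas the paper writes $w=(\lambda-A)^{-1}v$ with $v\in X$; your parametrization makes it immediate that every $w\in\mathcal{D}(A)$ is covered.
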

The lemma is known, but for convenience, we provide a proof.

\noindent{\bf Proof of Lemma \ref{lem9.1}}
\\
Let $\la > 0$ be arbitrary.  We have
$$
\la\Vert (\la-A)^{-1}v\Vert_X \le \la \Vert (\la-A)^{-1}\Vert_{X\to X}
\Vert v\Vert_X \le \Vert v\Vert_X
$$
for all $v \in X.$ Setting $w:= (\la-A)^{-1}v, \, v \in X$, we see that 
$w\in \DDD(A)$ and $\la\Vert w\Vert_X \le \Vert (\la-A)w\Vert_X$.
Then, $\Vert (A-\la)w\Vert^2_X \ge \la^2\Vert w\Vert^2_X$, and so
$$
0 \le \Vert(A-\la)w\Vert^2_X - \la^2\Vert w\Vert^2_X
= \Vert Aw\Vert^2_X - 2\la(Aw,w)_X.
$$
Hence, $(Aw,w)_X \le \frac{1}{2\la}\Vert Aw\Vert^2_X$ for 
$w\in \DDD(A)$ and any $\la > 0$.
Letting $\la \to \infty$, we see that $(Aw,w)_X \le 0$.
This proves Lemma \ref{lem9.1}.
$\blacksquare$
\smallskip

\noindent{\bf Proof of Lemma \ref{lem3.1} (i).}
\\
As in \cite[Theorem 2.5]{KRY}, we see that $\partial_t^\alpha$ is closed. The closedness of $\mathcal{A}$ follows easily from the closedness of $A$ and the fact that the convergence in $L^2(0,T;X)$ implies the pointwise convergence of a subsequence in $X$ (e.g., Lemma 3.22 (1) in \cite{Alt16}). $\blacksquare$
\smallskip

Lemma \ref{lem3.1} (ii)-(iii) can be proved as in \cite{FGY25}.
\smallskip

\noindent{\bf Proof of Lemma \ref{lem3.1} (iv).}
\\
Let $\va \in \LTLTX$ be given arbitrarily.
As is shown easily, $\DDD(\MAAA^*) = L^2(0,T;\DDD(A^*))$ is 
dense in $\LTLTX$ by using $\ooo{\DDD(A^*)^X} = X$.
Therefore, for any $\ep > 0$, we can find $\va_{\ep}\in \DDD(\MAAA^*)$
such that $\Vert \va - \va_{\ep}\Vert_{\LTLTX} < \ep$.
By Lemma \ref{lem3.1} (v), for $\va_{\ep}$, there exists $\psi_{\ep}
\in \DDD(\MAAA^*)\cap \DDD((\pppa)^*)$ such that 
$$
\Vert \va_{\ep} - \psi_{\ep}\Vert_{L^2(0,T;\DDD(A^*))} < \ep.
$$
On the other hand, 
$$
\Vert \va_{\ep} - \psi_{\ep}\Vert_{\LTLTX} 
\le C\Vert \va_{\ep} - \psi_{\ep}\Vert_{L^2(0,T;\DDD(A^*))},
$$
where the constant $C>0$ does not depend on $\va_{\ep}, \psi_{\ep}$.
Hence,
$$
\Vert \va - \psi_{\ep}\Vert_{\LTLTX}
\le \Vert \va - \va_{\ep}\Vert_{\LTLTX}
+ \Vert \va_{\ep} - \psi_{\ep}\Vert_{\LTLTX}
\le (1+C)\ep
$$
with $\psi_{\ep} \in \DDD(\MAAA^*) \cap 
\DDD((\pppa)^*)$.
Thus the proof of Lemma \ref{lem3.1} (iv) is complete.
$\blacksquare$
\smallskip

\noindent{\bf Proof of Lemma \ref{lem3.1} (v).}
\\
Let $\va \in \DDD(\MAAA^*)$ be arbitrarily given.
For arbitrary $\delta \in (0,T/2)$, we define $\va_{\delta} \in L^2(\R; \DDD(A^*))$
such that  
$$
\va_{\delta}(t) = 
\left\{ \begin{array}{rl}
& 0, \	\quad T-\delta < t < \infty \,\, \mbox{or}\,\,
-\infty < t < \delta,\\
& \va(t), \quad \mbox{otherwise}.
\end{array}\right.
$$
Then, 
$$
\Vert \va - \va_{\delta}\Vert_{L^2(0,T;\DDD(A^*))}
= \left(\left(
\int^{\delta}_0 + \int^T_{T-\delta} \right)
\Vert \va(t)\Vert_{\DDD(A^*)}^2 dt \right)^{\frac{1}{2}}.
$$
Since $\Vert \va(\cdot)\Vert_{\DDD(A^*)} \in L^2(0,T)$, 
for any $\ep > 0$ there exists a constant $\delta = \delta(\ep) > 0$ 
such that  
$$
\Vert \va - \va_{\delta}\Vert_{L^2(0,T;\DDD(A^*))} < \ep.
$$
Now, we introduce the mollifier (e.g., Adams \cite{Ad}).
More precisely, we choose $\chi \in C^{\infty}_c(\R)$ satisfying
$\chi(t) \ge 0$ for $t \in \R$, supp $\chi \subset \{ -1< t < 1\}$ and
$\int^{\infty}_{-\infty} \chi(t) dt = 1$.
For constant $\mu > 0$, we set 
$$
\www{\va_{\delta,\mu}}(t) := \int^{\infty}_{-\infty}
\frac{1}{\mu}\chi\left( \frac{t-s}{\mu} \right)\va_{\delta}(s) ds,
\quad t\in \R.
$$
Then, we can find $\mu = \mu(\ep) < \delta(\ep)$ such that 
$\www{\va_{\delta,\mu}} \in C^{\infty}_c((0,T);\DDD(A^*))
\subset \DDD((\pppa)^*) \cap \DDD(\MAAA^*)$,
supp $\www{\va_{\delta,\mu}} \subset (0,T)$,
and 
$$
\Vert \va_{\delta} - \www{\va_{\delta,\mu}}\Vert_{L^2(0,T;\DDD(A^*))}
< \ep.
$$
Setting $\www{\va_{\ep}}:= \www{\va_{\delta,\mu}} = \www{\va_{\delta(\ep),
\mu(\ep)}}$, we see that 
$\www{\va_{\ep}} \in \DDD((\pppa)^*) \cap \DDD(\MAAA^*)$ and
\begin{align*}
& \Vert \www{\va_{\ep}} - \va\Vert_{L^2(0,T;\DDD(A^*))}
= \Vert \www{\va_{\ep}} - \va_{\delta} + \va_{\delta} - \va
\Vert_{L^2(0,T;\DDD(A^*))}\\
\le & \Vert \www{\va_{\ep}} - \va_{\delta}\Vert_{L^2(0,T;\DDD(A^*))}
 + \Vert \va_{\delta} - \va\Vert_{L^2(0,T;\DDD(A^*))}
\le \ep + \ep.
\end{align*}
Thus the proof of Lemma \ref{lem3.1} (v) is complete.
$\blacksquare$
\smallskip

\noindent{\bf Proof of Lemma \ref{lem3.2}.}
\\
First, we prove
\begin{equation}\label{(7.4)}
(J^{\alpha})^* = J_{\alpha}.
\end{equation}
Indeed, for $u,v \in \LTLTX$, exchanging the orders of the integrals,
we obtain
\begin{align*}
& (J^{\alpha}u,v)_{\LTLTX}
= \frac{1}{\Gamma(\alpha)}\int^T_0 \left( \int^t_0 (t-s)^{\alpha-1}
u(s) ds, v(t)\right)_X dt\\
&= \frac{1}{\Gamma(\alpha)}\int^T_0 \left(u(s), \int^T_s 
(t-s)^{\alpha-1} v(t) dt \right)_X ds
= \int^T_0 (u(s), J_{\alpha}v(s))_X ds = (u,\, J_{\alpha}v)
_{\LTLTX}.
\end{align*}
This proves \eqref{(7.4)}.

Moreover, by the same way as in the proof of Lemma \ref{lem3.1} (v), based on 
the mollifier, arguing in 
$X$, not in $\DDD(\MAAA^*)$, for arbitrary $\va \in L^2(0,T;X)$, 
we can find $\www{\va_{\ep}} \in 
C^{\infty}_c(0,T;X)$ such that $\Vert \va - \www{\va_{\ep}}\Vert_{\LTLTX}
< \ep$.
Then, $\www{\va_{\ep}} \in J_{\alpha}\LTLTX$.
\\
Indeed, using $J_1 = J_{\alpha}J_{1-\alpha}$, we can obtain
\begin{align*}
& \www{\va_{\ep}}(t) = -\int^T_t \www{\va_{\ep}}'(s) ds
= (J_1(-\www{\va_{\ep}}'))(t) \\
&= J_{\alpha}(J_{1-\alpha}(-\www{\va_{\ep}}'))(t)
\in J_{\alpha}\LTLTX,
\end{align*}
which means that there exists $\www{\va_{\ep}} \in J_{\alpha}\LTLTX$ 
satisfying $\Vert \va - \www{\va_{\ep}}\Vert_{\LTLTX} < \ep.$ Thus, we proved Lemma \ref{lem3.2} (i). 

Next, with Lemma \ref{lem3.2} (i), 
we can apply a theorem on the inverse of an adjoint operator
(e.g., Theorem 1 on p.~224 in Yosida \cite{Yo}) to $J^{\alpha}$ to 
obtain $((J^{\alpha})^{-1})^* = ((J^{\alpha})^*)^{-1}$. We note that Theorem 1 in \cite{Yo} is considered for a dual operator in 
a Banach space, but also holds for the adjoint in a Hilbert space.
Therefore, \eqref{(7.4)} implies that $(\pppa)^* = (J_{\alpha})^{-1}$ and 
$\DDD((\pppa)^*) = J_{\alpha}\LTLTX$.
Thus, the proof of Lemma \ref{lem3.2} is complete.

Part (iii) is directly verified by exchanging the orders of the integrals
in $(J^{\alpha}f,g)_{\LTLTX}$.
\smallskip
 
\noindent{\bf Proof of Lemma \ref{lem5.1} (v).}
\\
We have
$$
(\AAAA u, u)_X = ((-\la + \la\JJJJ)u,u)_X
= -\la\Vert u\Vert^2_X + \la(\JJJJ u,u)_X.
$$
Since $\Vert \JJJJ\Vert_{X\to X} \le 1$, we have 
$\vert \la(\JJJJ u,u)_X\vert \le \la \Vert u\Vert_X^2$, that is,
$$
-\la\Vert u\Vert^2_X \le \la(\JJJJ u, u)_X \le \la\Vert u\Vert^2_X.
$$
Hence, $-\la\Vert u\Vert^2_X + \la(\JJJJ u, u)_X \le 0$,
which implies that $(\AAAA u, u)_X \le 0$.
$\blacksquare$
\smallskip

\noindent{\bf Acknowledgments.}
Masahiro Yamamoto was supported by Grant-in-Aid for Challenging Research (Pioneering) 21K18142 of 
Japan Society for the Promotion of Science.

\end{document}